\documentclass[11pt]{amsart}
\pdfoutput=1

\usepackage[margin=1.5in]{geometry}
\usepackage{mathptmx}
\usepackage{etex}
\usepackage[labelfont=bf]{caption}
\usepackage[section]{placeins}
\usepackage{url}
\usepackage{microtype}
\usepackage{longtable}
\usepackage{booktabs, comment}
\usepackage[french, english]{babel}
\usepackage{amsmath,amssymb,amsthm, comment, mathrsfs, url, yfonts}
\usepackage{array}
\usepackage[colorlinks]{hyperref}
\hypersetup{citecolor=blue}
\usepackage{amscd}
\usepackage{xypic}
\usepackage{todonotes}

\theoremstyle{definition}
\newtheorem{rem}{Remark}

\theoremstyle{plain}
\newtheorem{thm}{Theorem}
\newtheorem{prop}[thm]{Proposition}
\newtheorem{cor}[thm]{Corollary}
\newtheorem{conj}[thm]{Conjecture}

\newtheorem{lem}[thm]{Lemma}

\newcommand{\field}[1]{\mathbf{#1}}
\newcommand{\Qbar}{{\kern.1ex\overline{\kern-.1ex\Q\kern-.1ex}\kern.1ex}}
\newcommand{\overbar}[1]{{\kern.2ex\overline{\kern-.2ex#1\rule{0pt}{2mm}}}}
\newcommand{\Q}{\field{Q}}
\newcommand{\C}{\field{C}}
\newcommand{\R}{\field{R}}

\newcommand{\sH}{\mathcal{H}}
\newcommand{\sA}{\mathcal{A}}
\newcommand{\sM}{\mathcal{M}}

\newcommand{\Proj}{\mathbf{P}}

\newcommand{\Z}{\field{Z}}

\newcommand{\F}{\field{F}}
\newcommand{\Fbar}{\overline{\F\rule{0pt}{2.7mm}}}

\newcommand{\T}{\field{T}}

\newcommand{\CO}{\mathcal{O}}

\newcommand{\gp}{\mathfrak{p}}

\newcommand{\gN}{\mathfrak{N}}

\newcommand{\gm}{\mathfrak{m}}

\newcommand{\I}{\mathrm{I}}
\newcommand{\II}{\mathrm{II}}
\newcommand{\III}{\mathrm{III}}

\newcommand{\Tr}{\mathrm{Tr}}

\DeclareMathOperator{\Gal}{Gal}
\DeclareMathOperator{\GL}{GL}
\DeclareMathOperator{\SL}{SL}
\DeclareMathOperator{\End}{End}

\title[Abelian surfaces with good reduction]{\bf Examples of abelian
  surfaces with everywhere good reduction}

\author{Lassina Demb\'el\'e}
\address{Warwick Mathematics Institute, University of Warwick,
  Coventry CV4 7AL, United Kingdom}
\email{lassina.dembele@gmail.com}

\author{Abhinav Kumar}
\address{Department of Mathematics, Massachusetts Institute of
  Technology, 77 Massachusetts Institute of Technology, Cambridge MA
  02139, USA}
\email{abhinavk@alum.mit.edu}

\thanks{AK was supported in part by National Science Foundation grant
  DMS-0952486 and by a grant from the Solomon Buchsbaum Research
  Fund. LD is supported by the grant EPSRC EP/J002658/1.}
\begin{document}

\subjclass[2010]{11F41, 11F46, 11F67, 11G10}

\date{August 28, 2015}

\begin{abstract} 
We describe several explicit examples of simple abelian surfaces over
real quadratic fields with real multiplication and everywhere good
reduction. These examples provide evidence for the Eichler--Shimura
conjecture for Hilbert modular forms over a real quadratic
field. Several of the examples also support a conjecture of Brumer and
Kramer on abelian varieties associated to Siegel modular forms with
paramodular level structures.
\end{abstract}

\maketitle

\section{\bf \large Introduction}
A celebrated result of Fontaine~\cite{fontaine85} (see also
Abrashkin~\cite{abrashkin76}) asserts that there is no abelian scheme
over $\Z$. In other words, there is no abelian variety over $\Q$ with
everywhere good reduction. However, long before this result, there
were a few examples of elliptic curves of unit conductor over
quadratic fields in the literature. For example, the curve
\[
E:\, y^2+xy +\epsilon^2 y=x^3,
\]
where $\epsilon=\frac{5+\sqrt{29}}{2}$ is the fundamental unit in
$F=\Q(\sqrt{29})$, was known to Tate, and to Serre who extensively
studied it in~\cite{serre72}; it is also alluded to
in~\cite{shimura72}. Since then, there has been much work on finding
elliptic curves with everywhere good reduction over number fields,
with a particular emphasis on quadratic fields; see for
example~\cite{setzer81, stroeker83, comalada90, cremona92, pinch,
  kagawa98}. For real quadratic fields, the database of such curves
has been considerably expanded by
Elkies~\cite{elkies-db}. In~\cite{dembele-voight12}, it is shown that
this database is complete for all fundamental discriminants $\le 1000$
of narrow class number one, if one assumes modularity. A more
systematic algorithm which, given a number field $F$ and a finite set
of primes $S$ of its ring of integers, returns the set of all elliptic
curves over $F$ with good reduction outside of $S$ is given
in~\cite{cremona-lingham07}. However, this algorithm relies on
algorithms for $S$-integral points for elliptic curves, and has not
yet been full implemented for this reason. An alternate (and perhaps
more efficient) approach which uses $S$-unit equations is currently
being explored by Cremona and Elkies~\cite{cremona-elkies-project}. In
fact, a similar method has already been used by Smart~\cite{smart97}
to find hyperelliptic curves of genus $2$ with good reduction outside
$S$ when $F=\Q$.

In contrast, to the best of our knowledge there is not a single
example of an abelian surface with everywhere good reduction in the
literature (except in the case when the abelian surface has complex
multiplication \cite{dembele-donnelly}, or is a
$\Q$-surface~\cite{casselman70, shimura71} or a product of elliptic
curves).  This could possibly be explained by the fact that all the
algorithms we mentioned above do not readily generalize to the genus
$2$ situation. The goal of this paper is to remedy that situation by
providing the first equations for such surfaces over real quadratic
fields.

We note that the non-existence of abelian varieties over $\Q$ with
good reduction everywhere is instrumental in the Khare--Wintenberger
proof of the Serre conjecture for Galois representations of
$\Gal(\Qbar/\Q)$. As described in \cite{khare-motives}, the proof of
the Serre conjecture in retrospect can be viewed as a method to
exploit an accident which occurs in three different guises:

\begin{enumerate}
\item[(a)] (Fontaine, Abrashkin) There are no non-zero abelian
  varieties over $\Z$.
\item[(b)] (Serre, Tate) There are no irreducible representations
\[
\overbar{\rho}: \Gal(\Qbar/\Q)\to\GL_2(\Fbar),
\]
where $\Fbar$ is the algebraic closure of $\F_2$ or $\F_3$,
that are unramified outside of $2$ and $3$ respectively.
\item[(c)] $S_2(\mathrm{SL}_2(\Z))=0$, i.e., there are no cusp forms
  of level $\mathrm{SL}_2(\Z)$ and weight $2$.
\end{enumerate}
The failure of this happy accident over general number fields, such as
real quadratic fields, means that new techniques are needed for
analogous modularity results.

Our approach to the construction of abelian surfaces with everywhere
good reduction combines three key elements: (a) recent advances in the
computation of Hecke eigenvalues of Hilbert modular forms, (b) new
rational models of Hilbert modular surfaces, and (c) the
Eichler--Shimura conjecture for Hilbert modular forms. As a result of
our investigation, we produce further evidence for the
Eichler--Shimura conjecture, as well as for a conjecture of Brumer and
Kramer \cite{brumer-kramer12} associating abelian varieties to
paramodular Siegel modular forms on $\mathrm{Sp}(4)$.

The outline of the paper is as follows: in Sect.~\ref{sec:background},
we briefly recall the basic facts regarding these three
ingredients. In Sect.~\ref{sec:strategy}, we describe our strategy to
predict and find examples of good reduction abelian surfaces, assuming
the Eichler--Shimura conjecture. Section~\ref{sec:examples} provides
several illustrative examples of our methods, giving explicit abelian
surfaces with good reduction everywhere, and connecting them to
appropriate Hilbert modular forms. We conclude with a list of all our
examples in Sect.~\ref{sec:data}.

\section{\bf \large Background} \label{sec:background}

\subsection{\bf Hilbert modular forms} \label{subsec:hmf}

Let $F$ be a totally real field of narrow class number one and degree
$d$. We let $\CO_F$ be the ring of integers of $F$, $\mathfrak{d}_F$
the different of $F$. For each $i=1,\ldots, d$, let $a\mapsto a^{(i)}$
denote the $i$-th embedding of $F$ into $\R$, so that we have an
identification $F\otimes \R \simeq \R^d$. We let $F_{+}$ be the set of
totally positive elements in $F$, i.e. the inverse image of
$(\R_+)^d$, and $\CO_{F,+} = F_{+} \cap \CO_F$. We fix a totally
positive generator $\delta$ of $\mathfrak{d}_F$. (Note that every
ideal has such a generator since $F$ has narrow class number one.)

Let $\sH$ be the Poincar\'e upper half plane. The Hilbert modular
group $\SL_2(\CO_F)$ acts on $\sH^d$ by fractional linear
transformations:
\[
\begin{pmatrix} a & b \\ c & d \end{pmatrix}  \cdot
(z_1, \dots, z_d) = \left( \frac{a^{(i)} z_i + b^{(i)}}{c^{(i)} z_i +
  d^{(i)}} \right)_{i=1,\dots,d} \hspace{-10mm}.
\]
Let $\gN$ be an integral ideal, and set
\[
\Gamma_0(\mathfrak{N}) = \left\{  \begin{pmatrix} a & b \\ c & d \end{pmatrix} \in \SL_2(\CO_F) : c \in \mathfrak{N} \right\}.
\]
Let $k\ge 2$ be an even integer. A Hilbert modular form of
weight\footnote{More precisely, this defines a Hilbert modular form of
  {\em parallel} weight $k$.} $k$ and level $\gN$ is a holomorphic
function $f: \sH^d \to \C$ such that
\[
f(\gamma z) = \Big( \prod_{i=1}^d (c^{(i)}z_i + d^{(i)})\Big)^k f(z)\,\,\text{for all}\,\,\gamma = \begin{pmatrix} a & b \\ c & d \end{pmatrix}
\in \Gamma_0(\gN).
\]

Let $f$ be a Hilbert modular form of weight $k$ and level $\gN$.  Then
$f$ is invariant under the matrices $\left(\!\begin{smallmatrix}1 &
\mu \\ 0 & 1\end{smallmatrix}\!\right)$ for $\mu \in \CO_F$, which act
as $z\mapsto z + \mu$.  Hence, by the Koecher
principle~\cite{bruinier}, $f$ admits a $q$-expansion of the form
\[
f(z) = a_0 +\sum_{\mu \in \CO_{F,+}} a_\mu e^{2\pi i \Tr(\frac{\mu
    z}{\delta})},
\]
where $\Tr(\nu z) = \nu^{(1)} z_1 + \cdots + \nu^{(d)} z_d,$ for $\nu
\in F_{+}$.  We say that $f$ is a cusp form if $a_0=0$. Since $f$ is
invariant under the action of the matrices $\mathrm{diag}(\epsilon,
\epsilon^{-1})$ for $\epsilon \in \CO_F^\times$ in $\SL_2(\CO_F)$,
which act as $z\mapsto \epsilon^2 z$, we have $a_{\epsilon^2\mu} =
a_\mu$ for all $\mu \in \CO_{F, +}$ and $\epsilon \in
\CO_F^\times$. Let $f$ be a cusp form of weight $k$ and level
$\gN$. Then, for every ideal $\gm \subseteq \CO_F$, the quantity
$a_{\gm}(f) = a_{\mu}$, where $\mu$ is a totally positive generator of
$\gm$, is well-defined and depends only on $\gm$. We call it the
$\gm$-th Fourier coefficient of $f$. When $f$ is a normalized
eigenform for the Hecke operators (i.e. $a_{(1)}(f) = 1$), the
eigenvalue of the Hecke operator $T_{\gm}$ is $a_{\gm}(f)$ for each
$\gm \nmid \gN$. It is a theorem of Shimura~\cite{shimura78} that in
this situation, the $a_{\gm}(f)$ are algebraic integers and the
$\Z$-subalgebra $\CO_f = \Z[a_{\gm}(f): \gm \subseteq\CO_F]\subset \C$
has finite rank and is therefore an order in some number field $K_f$
(called the field of Fourier coefficients of $f$). For more background
on Hilbert modular forms, see \cite{dembele-voight12, freitag,
  bruinier}.

Here, we wish to point out some new techniques in the computation of
Hilbert modular forms, which arise from the
Eichler--Jacquet--Langlands--Shimizu correspondence between Hilbert
modular forms and quaternionic modular forms. We will not go into
details here, but instead refer the reader to \cite{dembele-voight12}
for a detailed description of these methods. The upshot is that it is
possible to efficiently compute systems of Hecke eigenvalues for
Hilbert modular cusp forms by instead computing modular forms on
finite spaces or on Shimura curves. This will be crucial to our
methods in this paper. The corresponding algorithms have been
implemented in the Hilbert Modular Forms Package in \texttt{Magma}
\cite{magma-pkg}).

\subsection{\bf Hilbert modular surfaces} \label{subsec:hms}

Let $K$ be a real quadratic field of discriminant $D'$. The Hilbert
modular surface $Y_{-}(D')$ is a compactification of the coarse moduli
space which parametrizes principally polarized abelian surfaces with
real multiplication by the ring of integers $\CO_{K}$ of $K$,
i.e. pairs $(A, \iota)$, where $\iota: \CO_{K} \to \End_{\Qbar}(A)$ is
a homomorphism. The complex points $Y_{-}(D')(\C)$ of this space are
obtained by compactifying $\SL_2(\CO_{K}) \backslash (\sH^+ \times
\sH^-$), where $\sH^+$ and $\sH^-$ are the upper and lower half-planes
respectively, by adding finitely many cusps and resolving the
singularities of the resulting space. The Hilbert modular surface maps
to the moduli space $\sA_2$ of principally polarized abelian surfaces,
by forgetting the action of $\CO_K$. Its image is the Humbert surface
$\sH_{D'}$, and the map $Y_{-}(D') \to \sH_{D'}$ is a double cover,
ramified along a union of modular curves. The surfaces $Y_{-}(D')$
have models over the integers, with good reduction away from primes
dividing $D'$.

Recently, Elkies and the second author \cite{elkies-kumar12} computed
explicit birational models over $\Q$ for these Hilbert modular
surfaces for all the fundamental discriminants $D'$ less than $100$,
by identifying the Humbert surface $\sH_{D'}$ with a moduli space of
elliptic K3 surfaces, which may be computed explicitly. For the
fundamental discriminants in the range $1 < D' < 100$, the Humbert
surface is a rational surface, i.e. birational to $\Proj^2$ over
$\Qbar$ (and in fact, even over $\Q$). Therefore, they are able to
exhibit $\sH_{D'}$ as a double cover of $\Proj^2$, with equation $z^2
= f(r,s)$, where $r,s$ are parameters on $\Proj^2$. They also get the
map to $\sA_2$, which is birational to $\sM_2$, the moduli space of
genus $2$ curves. It is given by producing the Igusa--Clebsch
invariants of the image point as rational functions of $r$ and $s$.

\begin{rem}
Hilbert modular surfaces have been an object of extensive study in
number theory and arithmetic geometry, especially in the latter half
of the twentieth century. In particular, their geometric
classification was described by Hirzebruch, van de Ven, Zagier, van
der Geer and others. In the comprehensive reference
\cite{van-der-Geer}, arithmetic models for some of them are also
described. However, for our work, we need explicit equations for these
surfaces along with the map to $\sA_2$, and this does not seem to be
available for any discriminant other than $5$ except in
\cite{elkies-kumar12} (though it could be worked out in principle
using Hilbert and Siegel modular forms). Consequently, we will use the
equations from \cite{elkies-kumar12} throughout.
\end{rem}

\subsection{\bf Eichler--Shimura conjecture} \label{subsec:eichler-shimura}

The following conjecture is instrumental in identifying the examples
in this paper.

\begin{conj}[Eichler--Shimura]\label{conj:eichler-shimura} 
Let $F$ be a totally real number field and $\gN$ an integral ideal of
$F$. Let $f$ be a Hilbert newform of weight $2$ and level $\gN$. Let
$\CO_f = \Z[a_\gm(f): \gm \subseteq \CO_F]$ be the order generated by
the Fourier coefficients of $f$, and $K_f$ its field of
fractions. There exists an abelian variety $A_f/F$ of dimension
$[K_f:\Q]$ with good reduction outside of $\gN$ and with $\CO_f
\hookrightarrow \mathrm{End}_F(A_f)$, such that
\[
L(A_f,s)=\prod_{\tau\in\mathrm{Hom}(K_f, \C)}L(f^\tau, s),
\] 
where
\[
L(f^\tau, s):=\sum_{\gm\subseteq \CO_F}\frac{a_{\gm}(f)^\tau}{\mathrm{N}\gm^s}.
\]
\end{conj}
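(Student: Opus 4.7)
The statement is an open conjecture in its full generality, so my plan is to describe the construction that succeeds when a suitable Shimura curve is available, and to identify the genuine obstacle in the remaining case — which is precisely the regime of the examples in this paper.

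The plan is to imitate the classical Eichler--Shimura construction over $\Q$: realize the eigensystem $\{a_\gm(f)\}$ in the cohomology of an auxiliary Shimura variety, and cut out $A_f$ as a Hecke-equivariant isogeny factor of a Jacobian. Concretely, I would first use the Jacquet--Langlands--Shimizu correspondence to transfer $f$ to a quaternionic automorphic form on a quaternion algebra $B/F$ that is split at exactly one archimedean place and at an appropriate set of finite places dividing $\gN$. The associated Shimura curve $X_B$ has a Jacobian $J_B$ carrying a faithful action of a Hecke algebra containing $\CO_f$; the idempotent in $\CO_f\otimes\Q$ cut out by $f$ singles out an abelian subvariety (up to isogeny) $A_f\subset J_B$ of dimension $[K_f:\Q]$, equipped with the required embedding $\CO_f\hookrightarrow\mathrm{End}_F(A_f)$. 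Good reduction outside $\gN$ follows from the existence of smooth integral models of $X_B$ away from the discriminant of $B$ and of the level.

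To match $L$-functions, I would next invoke the Eichler--Shimura congruence relation on the canonical integral model of $X_B$: at a prime $\gp\nmid\gN$, the Frobenius endomorphism of $A_f \bmod \gp$ satisfies $T^{2}-a_\gp(f)\,T+\mathrm{N}\gp$, and this identity is equivariant under the Galois action permuting the embeddings $\tau:K_f\to\C$, so that the unramified Euler factors of $L(A_f,s)$ factor as $\prod_\tau L_\gp(f^\tau,s)$. The local factors at ramified primes would be matched using local Langlands together with the explicit local geometry of $X_B$.

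The hard part — and the reason the conjecture remains open — is the existence of $B$. The Brauer--Hasse--Noether parity constraint demands an even total number of ramified places; when $[F:\Q]$ is even this forces at least one finite place of ramification, which in turn forces $f$ to be Steinberg or supercuspidal at some prime of $\gN$. In the central case of this paper, where $F$ is real quadratic and $\gN=\CO_F$, no such $B$ exists and $f$ does not contribute to the cohomology of any Shimura curve. One must then realize $A_f$ inside the middle cohomology of the Hilbert modular surface $Y_{-}(D')$, or equivalently inside the compatible system of Galois representations attached to $f$ by Blasius--Rogawski, Taylor and subsequent authors. Passing from such a motive or Galois representation to an actual abelian variety with the required endomorphisms and $L$-function is the genuine obstruction; and it is for exactly this reason that the present paper uses the conjecture as a \emph{prediction} — producing candidate $A_f$ explicitly via the rational models of $Y_{-}(D')$ from \cite{elkies-kumar12} and verifying the predicted Hecke data — rather than attempting a proof.
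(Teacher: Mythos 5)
You correctly identify that this statement is an open conjecture which the paper does not prove, and your account of the known cases (via Shimura curves when $[F:\Q]$ is odd or $f$ is Steinberg/supercuspidal at some prime of $\gN$) and of the parity obstruction in the level-$(1)$ real quadratic case matches the paper's own discussion following the conjecture. Your description of how the paper uses the conjecture as a prediction to be tested against explicit points on $Y_{-}(D')$ is likewise accurate, so there is nothing to correct.
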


When $F=\Q$, this conjecture is a theorem, due to Eichler for prime
level and Shimura in the general case. The Eichler--Shimura
construction can be summarized as follows. Let $N>1$ be an integer,
and let $X_1(N)$ be the modular curve of level $\Gamma_1(N)$. This
curve and its Jacobian $J_1(N)$ are defined over $\Q$. We recall that
the space $S_2(\Gamma_1(N))$ of cusp forms of weight $2$ and level
$\Gamma_1(N)$ is a $\T$-module, where $\T$ is the Hecke algebra. Let
$f \in S_2(\Gamma_1(N))$ be a newform, and let
$I_f=\mathrm{Ann}_{\T}(f)$. Shimura~\cite{shimura73} showed that the
quotient
\[
A_f:=J_1(N)/I_fJ_1(N)
\]
is an abelian variety $A_f$ of dimension $[K_f:\Q]$ defined over $\Q$
with endomorphisms by the order $\CO_f = \Z[a_n(f): n\ge 1]$ and that
\[
L(A_f, s)=\prod_{g \in [f]} L(g, s),
\]
where $[f]$ denotes the Galois orbit of $f$.

One of the main consequences of the proof of the Serre
conjecture~\cite{serre87} by
Khare--Wintenberger~\cite{khare-wintenberger09} is that the converse
to Conjecture~\ref{conj:eichler-shimura} is true when $F=\Q$. That is,
an abelian variety of $\GL_2$-type is isogenous to a $\Q$-simple
factor of $J_1(N)$ for some $N$ \cite{khare-wintenbergerICM}. And so,
this provides a theoretical construction of all abelian varieties of
$\GL_2$-type over $\Q$ with a prescribed conductor. In fact, one can
make this explicit in many cases (see~\cite{cremona97} for elliptic
curves, and~\cite{gonzalez-guardia02, guardia02} for abelian
surfaces).

For $[F:\Q]>1$, the known cases of
Conjecture~\ref{conj:eichler-shimura} exploit the cohomology of
Shimura curves. For instance, the conjecture is known when $[F:\Q]$ is
odd, or when $\mathfrak N$ is exactly divisible by a prime $\mathfrak
p$ of $\CO_F$ \cite{zhang}. The simplest case in which
Conjecture~\ref{conj:eichler-shimura} is still unknown is when $f$ is
a newform of level $(1)$ and weight $2$ over a real quadratic
field. In that case, the conjecture predicts that the associated
abelian variety $A_f$ has everywhere good reduction.

\section{\bf \large The strategy} \label{sec:strategy}
Let $F$ be a number field of class number $1$, and $E$ an elliptic
curve over $F$ given by a (global minimal) Weierstrass equation
\[
E: y^2 + a_1xy +a_3y =x^3+a_2 x^2 +a_4 x + a_6,
\] 
with $a_i\in \CO_F$, the ring of integers of $F$. The invariants $c_4$
and $c_6$ of $E$ satisfy the equation $c_4^3-c_6^2=1728\Delta$, where
$\Delta$ is the discriminant of $E$. In other words, the pair $(c_4,
c_6)$ is an $\CO_F$-integral point on the curve
\begin{align}\label{eq:ell-curve}
y^2=x^3-1728\Delta.
\end{align}
Since $E$ has everywhere good reduction if and only if $\Delta$ is a
unit in $\CO_F$, we can find all the elliptic curves over $F$ with
everywhere good reduction by solving~(\ref{eq:ell-curve}) as $\Delta$
runs over a finite set of representatives of
$\CO_F^\times/(\CO_F^\times)^{12}$. (Note that given a pair $(c_4,
c_6)$, we get a minimal model by using the Tate algorithm.) Most of
the algorithms we mentioned earlier rely on this fact.

Unfortunately, the reduction of abelian varieties of higher dimension
is not characterized by a nice single diophantine equation such
as~(\ref{eq:ell-curve}). For this reason, we need an additional
ingredient which will guide our search. This extra input is provided
by the Eichler--Shimura conjecture.

Suppose we have a Hilbert modular eigenform $f$ of weight $2$ over
$F$, with Hecke eigenvalues $a_{\gm}(f)$ in a real quadratic field
$K_f$ of discriminant $D'$. The Eichler--Shimura conjecture predicts
that there should be an abelian variety $A$ over $F$ of dimension
$[K_f:\Q]= 2$, (up to isogeny) associated to this data, which has real
multiplication by an order in $K_f$. Furthermore, the conductor of $A$
should divide the level $\gN$ of $f$. In particular, if $f$ has level
$(1)$, the conjectural abelian surface $A$ has good reduction
everywhere. This observation will be the source of our examples in
this paper, for which the abelian surface turns out to be principally
polarized, and also has real multiplication by the full ring of
integers of $K_f$. Our strategy to produce such $A$ is as follows:

\begin{enumerate}
\item[(a)] Find a Hilbert modular form of level $(1)$ and weight $2$
  for a real quadratic field $F$, with coefficients in a real
  quadratic field $K_f$ of discriminant $D'$.
\item[(b)] Find an $F$-rational point on the Hilbert modular surface
  $Y_{-}(D')$, for which the $L$-function of the associated abelian
  surface matches that of $f$ at several Euler factors, up to twist.
\item[(c)] Compute the correct quadratic twist of the abelian surface,
  or the genus $2$ curve.
\item[(d)] Check that the abelian surface has good reduction
  everywhere.
\item[(e)] Prove that the $L$-functions indeed match up.
\end{enumerate}

Note that there is no reason one has to restrict to the case when the
base field is a real quadratic field $F$. The next interesting case in
which the Eichler--Shimura conjecture is not known is that of totally
real quartic base fields $L$. So one could look for eigenforms of
weight $2$ for $\SL_2(\CO_L)$ whose Fourier coefficients are in a real
quadratic field $K$ of discriminant $D$, and on the other hand try to
find $L$-rational points on the Hilbert modular surface $Y_{-}(D)$. In
this paper, we looked at quadratic base fields $F$ for convenience. On
the other hand, if we instead want examples for which the field $K_f$
has larger degree, we might need explicit rational models for the
appropriate Hilbert modular varieties, which are not currently
available. Hence the choice of $K$ is restricted.

For simplicity, we investigated only real quadratic fields $F$ of
narrow class number $1$ and discriminant less than $1000$. We found
twenty-eight examples of Hilbert newforms, and corresponding abelian
surfaces for most of these forms. We will say a few words later about
the ``missing'' examples, which we hope will be found in future work.

\section{\bf \large The examples}\label{sec:examples}
From now on, $F$ will denote a real quadratic field of narrow class
number one. We let $D$ be its fundamental discriminant. We will denote
its ring of integers by $\CO_F$. Let $w = \sqrt{D}$ or $(1 +
\sqrt{D})/2$ according as $D$ is $0$ or $1$ mod $4$, so that $\{1,w\}$
is a $\Z$-basis of $\CO_F$. For a Hilbert newform $f$ of weight $2$
over $F$, we will let $\CO_f = \Z[a_{\gm}(f): \gm \subseteq \CO_F]$
and $K_f$ be the order and the field generated by the Fourier
coefficients, respectively. We will focus on forms such that
$[K_f:\Q]=2$, since we do not yet know how to write simple equations
for general Hilbert modular varieties. We let $D'$ be the discriminant
of $K_f$ and write $e = \sqrt{D'}$ or $(1 + \sqrt{D'})/2$.  We denote
the non-trivial element of $\Gal(F/\Q)$ and $\Gal(K_f/\Q)$ by $\sigma$
and $\tau$ respectively. The $L$-series of the conjectural surface
$A_f$ attached to $f$ is written as
\[
L(A_f, s) = L(f, s) L(f^\tau, s)= \prod_{\gp} \frac{1}{Q_{\gp}(\mathrm{N}(\gp)^{-s})},
\]
where
\begin{align*}
Q_{\gp}(T) &:=\big(T^2-a_{\gp}(f)T+\mathrm{N}(\gp)\big)\big(T^2-a_{\gp}(f)^\tau T+\mathrm{N}(\gp)\big)\\
&\,\, = T^4 - s_{\gp}(f) T^3 + t_{\gp}(f)T^2 - \mathrm{N}(\gp) s_{\gp}(f)T + \mathrm{N}(\gp)^2. 
\end{align*}

Our examples (Table~\ref{tab1}) can be subdivided in the following
cases, with the majority of examples coming from Case II.
\begin{enumerate}
\item[\bf I:] The form $f$ is $\Gal(F/\Q)$-invariant.
\item[\bf II:] The form $f$ is not $\Gal(F/\Q)$-invariant, but its
  $\Gal(K_f/\Q)$-orbit $\{ f, f^\tau\}$ is.
\item[\bf III:] The $\Gal(K_f/\Q)$-orbit $\{f,f^\tau\}$ is not
  $\Gal(F/\Q)$-invariant.
\end{enumerate}

\begin{table} [h!] 
\caption{A summary of the examples}
\label{tab1}
\begin{tabular}{ccc}
{
\begin{tabular}{ >{$}c<{$}   >{$}c<{$}   >{$}c<{$}   }
\toprule
D & D' & \text{Case} \\
\midrule
53 & 8 & \I \\ 
61 & 12 & \I\\ 
73& 5 & \I \\ 
193 & 17 & \II \\ 
233&17 & \II\\ 
277&29 & \II\\ 
349&21 & \II\\
353& 5 & \III\\
373&93 & \II\\
389&8 & \II\\
\bottomrule
\end{tabular}

\smallskip} &

{\begin{tabular}{ >{$}c<{$}   >{$}c<{$}   >{$}c<{$}   }
\toprule
D & D' & \text{Case} \\
\midrule
397&24 & \II\\
409&13 & \II\\
421&5 & \I \\
421 & 5 & \III \\
433&12 & \II\\
461&29 & \II\\
613&21 & \II \\
677&13 & \II \\
677 & 29 & \II \\
 & & \\
\bottomrule
\end{tabular}

\smallskip} &

{ \begin{tabular}{ >{$}c<{$}   >{$}c<{$}   >{$}c<{$}   }
\toprule
D & D' & \text{Case} \\
\midrule
677 & 85 & \II \\
709&5 & \II \\
797&8 & \II \\
797 & 29 & \II\\
809&5 & \II \\
821&44 & \II \\
853&21 & \II\\
929&13 & \II \\
997&13 & \II \\
& & \\
\bottomrule
\end{tabular}

\smallskip}

\end{tabular}
\end{table}

We will see that Case I is somewhat special: it is frequently possible
to produce the associated abelian surface through analytic methods for
classical modular forms.

In~\cite{brumer-kramer12}, Brumer--Kramer proposed the following
conjecture as a genus $2$ analogue of the Eichler--Shimura
construction for classical newforms of weight $2$ (with integer
coefficients).

\begin{conj}[Brumer--Kramer]\label{conj:brumer-kramer}
Let $g$ be a paramodular Siegel newform of genus $2$, weight $2$ and
level $N$, with integer Hecke eigenvalues, which is not in the span of
Gritsenko lifts.  Then there exists an abelian surface $B$ defined
over $\Q$ of conductor $N$ such that $\End_{\Q}(B)=\Z$ and $L(g,
s)=L(B,s)$.
\end{conj}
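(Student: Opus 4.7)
The plan would be to follow the general philosophy of the Langlands programme and proceed in three stages: attach a Galois representation to $g$, recognize this representation as arising from an abelian surface, and verify the matching of local $L$-factors. In the first stage I would attach to $g$ a continuous four-dimensional symplectic $\ell$-adic Galois representation
\[
\rho_{g,\ell}\colon \Gal(\Qbar/\Q) \longrightarrow \mathrm{GSp}_4(\overline{\Q}_\ell),
\]
with similitude character the cyclotomic character, unramified outside $N\ell$, and satisfying
\[
\det\bigl(1 - \rho_{g,\ell}(\mathrm{Frob}_p)T\bigr) = Q_p(T)
\]
at every prime $p \nmid N\ell$, where $Q_p$ is the local spinor Euler factor of $g$. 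For Siegel newforms of cohomological (regular) weight, such a construction is due to Taylor, Laumon, Weissauer and others via the étale cohomology of Siegel threefolds. Here the weight is $2$ and hence non-cohomological, so I would embed $g$ in a $p$-adic Coleman family of paramodular forms containing classical points of higher regular weight, and then extract $\rho_{g,\ell}$ by interpolation in the style of Deligne--Serre. The hypothesis that $g$ is not in the span of Gritsenko lifts excludes the Saito--Kurokawa CAP case, where $\rho_{g,\ell}$ becomes reducible, and should guarantee that the constructed representation is irreducible.

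In the second stage I would upgrade $\rho_{g,\ell}$ to an actual abelian surface $B/\Q$. Rationality of the Hecke eigenvalues forces $\rho_{g,\ell}$ to be defined over $\Q_\ell$; weight-monodromy predicts Hodge--Tate weights $\{0,0,1,1\}$; and the symplectic structure on $\rho_{g,\ell}$ matches that on the rational Tate module of an abelian surface. The task is then a special case of the Fontaine--Mazur conjecture: show that an irreducible, geometric, $\Q_\ell$-rational Galois representation of this shape is the Tate module of an abelian surface $B$ with $\End_{\Q}(B)=\Z$. Plausible approaches include a direct geometric realization via cycles on a Shimura variety for $\mathrm{GSp}(4)$ in the style of the Kudla programme, or a Faltings--Serre comparison with a candidate surface produced by other means.

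In the final stage, once $B$ is in hand, I would verify $L(B,s)=L(g,s)$ Euler factor by Euler factor. At primes $p\nmid N$ the equality is immediate from Step~1. At primes $p\mid N$ one compares local $L$- and $\varepsilon$-factors using the local Langlands correspondence for $\mathrm{GSp}_4(\Q_p)$ of Gan--Takeda on the automorphic side and $p$-adic Hodge theory on the Galois side; the decisive input is the Roberts--Schmidt theory of paramodular local newforms, which is engineered precisely so that the automorphic conductor $N$ coincides with the Artin conductor of $\rho_{g,\ell}$, and hence with the conductor of $B$. The main obstacle is unquestionably Step~2: even granting a satisfactory construction of $\rho_{g,\ell}$, the passage from a Galois representation of $\mathrm{GSp}_4$-type to an actual abelian surface lies squarely within the Fontaine--Mazur and Tate conjectures, and in this non-ordinary, low-weight regime it is wide open. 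This is precisely why the present paper instead adopts the opposite viewpoint, starting from explicit candidate abelian surfaces and matching their Euler factors against specific paramodular forms to produce evidence rather than a proof.
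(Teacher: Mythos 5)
You have been asked to prove Conjecture~\ref{conj:brumer-kramer}, which is an open conjecture of Brumer and Kramer; the paper does not prove it, but only uses it as a hypothesis (in Proposition~\ref{prop:bk-implies-es}) and supplies numerical evidence for it (Corollary~\ref{cor:bk-evidence} and the Case~II examples). So there is no proof in the paper to compare against, and your proposal is a strategy outline rather than a proof --- as you yourself concede in your closing paragraph.

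The decisive gap is your Step~2: passing from an irreducible, geometric, $\Q_\ell$-rational, four-dimensional symplectic Galois representation with Hodge--Tate weights $\{0,0,1,1\}$ to an actual abelian surface $B/\Q$ with $\End_{\Q}(B)=\Z$ is precisely an instance of the Fontaine--Mazur and Tate conjectures, and no general mechanism (cycles on a $\mathrm{GSp}(4)$ Shimura variety, or a Faltings--Serre comparison against a ``candidate produced by other means'') exists to manufacture $B$ out of $\rho_{g,\ell}$; invoking these is naming the problem, not solving it. Step~1 is also not secure in the generality required: weight~$2$ Siegel forms are non-cohomological, and the interpolation construction of $\rho_{g,\ell}$ along a $p$-adic family yields at best a pseudo-representation, with substantially weaker control of irreducibility and of local--global compatibility at the ramified primes --- exactly the places where your Step~3 needs the Roberts--Schmidt paramodular conductor to match the Artin conductor. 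The paper's actual contribution runs in the opposite direction: it starts from explicit genus~$2$ curves located as rational points on Hilbert modular surfaces, proves their Jacobians modular, and thereby exhibits paramodular abelian surfaces predicted by the conjecture, i.e.\ evidence rather than proof.
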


The examples in Case II show that there is a strong connection between
this conjecture and Conjecture~\ref{conj:eichler-shimura}.

\subsection{Case I} 

In this case, the Hecke eigenvalues of the Hilbert modular form $f$
satisfy 
\[
a_\gp(f) = a_{\gp^\sigma}(f).
\] 
This implies that the form $f$ is a base change from $\Q$.  Let $g$ be
a newform in $S_2(\Gamma_1(D))$ whose base change is $f$. Since the
level of $f$ is $(1)$, the form $g \in S_2(\Gamma_1(D),
\chi_D)^{\textrm{new}}$ by~\cite[Prop. 2, p. 263]{mazur-wiles84},
where $\chi_D$ is the fundamental character of the quadratic field
$F=\Q(\sqrt{D})$. Let $L_g$ be the coefficient field of $g$. Then,
$L_g$ is a quartic CM field which contains $K_f$. The non-trivial
element of $\Gal(L_g/K_f)$, which we denote by $(x\mapsto
\overline{x}, x \in L_g)$, extends to complex conjugation.  Let $B_g$
be the abelian variety attached to the form $g$. Then $B_g$ is a
fourfold such that $\End_\Q(B_g)\otimes \Q \simeq L_g$. Let $w_D$ be
the Atkin--Lehner involution on $S_2(\Gamma_1(D),
\chi_D)^{\textrm{new}}$. This induces an involution on $B_g$, which we
still denote by $w_D$. Shimura~\cite[\S~7.7]{shimura71} shows the
following:

\begin{enumerate}
\item[(a)] $w_D$ is defined over $F$, and $w_D^\sigma = - w_D$;
\item[(b)] $w_D\cdot [x] = [\overline{x}]\cdot w_D$, where $[x]$
  denotes the endomorphism induced on $B_g$ by $x \in L_g$.
\item[(c)] The abelian surface $A_f := (1 + w_D) B_g$ is defined over
  $F$, and is isogenous to its Galois conjugate given by $A_f^\sigma
  := (1 - w_D) B_g$. Moreover, we have
\[
B_g\otimes_{\Q} F \sim A_f \times A_f^\sigma.
\]
\end{enumerate}
So in this case, the existence of the surface $A_f$ is a direct
consequence of the classical Eichler--Shimura construction.

Although Conjecture~\ref{conj:eichler-shimura} is known in this case,
it would still be desirable to have an explicit equation for the
surface $A_f$. We outline two methods to find it, the first of which
is special to this case.

\subsubsection{\bf Method 1} \label{subsec:method1}

This method is analytic, and has an obvious connection with the Oda
conjecture~\cite[p.~xii]{oda72} for Hilbert modular forms that arise
from base change. It assumes that both $A_f$ and $A_f^\sigma$ are
principally polarizable.  To describe it, we recall that by
\cite[Theorems 6.2.4 and 6.2.6]{couveignes-edixhoven2011}, there exist
newforms $g_1, g_2 \in S_2(\Gamma_1(D), \chi_D)^{\textrm{new}}$ such
that $g_1, \overline{g}_1$, $g_2$ and $\overline{g}_2$ form a basis of
the Hecke constituent of $g$ and
\[
w_D (g_1) = \overline{\lambda}_D(g_1)\overline{g}_1,\, w_D (g_2) =
\overline{\lambda}_D(g_2)\overline{g}_2,
\]
where $a_D(g)$ is the Hecke eigenvalue of $g$ at $D$ and
$\lambda_D(g)=\frac{a_D(g)}{\sqrt{D}}$, the pseudo-eigenvalue of
$w_D$. The matrix of $w_D$ in the basis $\{g_1, \overline{g}_1, g_2,
\overline{g}_2\}$ is given by
\begin{align*} W_D := 
\begin{bmatrix} 0&\lambda_D(g_1)&0&0\\
\overline{\lambda}_D(g_1)&0&0&0\\
0&0&0&\lambda_D(g_2)\\
0&0&\overline{\lambda}_D(g_2)&0
\end{bmatrix}
\end{align*}
From this, we see that $W_D^\sigma = -W_D$. The following lemma is a
simple adaptation of Cremona's~\cite[Lemma 5.6.2]{cremona92}.

\begin{lem}\label{lem:differential-forms} 
The set of forms $h_i^{\pm} := \frac{1}{2}(g_i \pm w_D(g_i))$,
$i=1,2$, are bases for the $\pm$-eigenspaces of $W_D$, acting on the
Hecke constituent of $g$, which give a decomposition of the space of
differential $1$-forms $H^0(B_g\otimes_\Q F, \Omega_{B_g\otimes_\Q
  F/F}^1)$ according to the action of $\Gal(F/\Q)$.
\end{lem}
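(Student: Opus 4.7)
The approach is to read the eigenvectors of $W_D$ off the explicit matrix already written down, and then match the resulting $\pm 1$-eigenspaces against Shimura's isogeny decomposition $B_g \otimes_\Q F \sim A_f \times A_f^\sigma$. First I would check that $W_D^2$ acts as the identity on the Hecke constituent. By the $2\times 2$ block structure of $W_D$, this reduces to $\lambda_D(g_i)\overline{\lambda}_D(g_i) = 1$ for $i = 1, 2$, which is the standard identity $|a_D(g)|^2 = D$ for the Atkin--Lehner pseudo-eigenvalue of a newform with primitive real quadratic nebentypus. Granted this, $\tfrac{1}{2}(1 \pm w_D)$ are complementary idempotents, so $h_i^\pm$ lies by construction in the $\pm 1$-eigenspace. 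From $g_i = h_i^+ + h_i^-$ and $w_D(g_i) = h_i^+ - h_i^-$, the set $\{h_1^\pm, h_2^\pm\}$ spans the four-dimensional Hecke constituent, and the characteristic polynomial $(t^2-1)^2$ of $W_D$ forces each eigenspace to be two-dimensional, giving the basis claim.

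To transfer the statement to differentials, I would invoke the classical part of Shimura's construction of $B_g$: the Hecke constituent of $g$ is canonically isomorphic to $H^0(B_g/\Q,\Omega^1) \otimes_\Q \C$ in a way that intertwines the analytic action of $w_D$ on forms with the algebraic action of $w_D$ on differentials. Base-changing to $F$ gives the corresponding identification for $B_g \otimes_\Q F$, and under Shimura's identities $A_f = (1+w_D)B_g$, $A_f^\sigma = (1-w_D)B_g$ from item~(c) above, the $\pm 1$-eigenspaces of $W_D$ correspond respectively to the pullbacks of $H^0(A_f, \Omega^1)$ and $H^0(A_f^\sigma, \Omega^1)$ inside $H^0(B_g \otimes_\Q F, \Omega^1)$.

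Finally, to see that this decomposition is Galois-equivariant in the sense stated, I would argue as follows: if $w_D\omega = \omega$, then applying $\sigma$ semilinearly and using the naturality $(w_D\omega)^\sigma = w_D^\sigma \omega^\sigma$ together with $w_D^\sigma = -w_D$ from item~(a), one obtains $\omega^\sigma = -w_D\omega^\sigma$, so $w_D\omega^\sigma = -\omega^\sigma$. Thus $\sigma$ interchanges the two eigenspaces, matching the Galois swap $A_f \leftrightarrow A_f^\sigma$. The only mildly nontrivial step is the compatibility between the analytic $W_D$ acting on the Hecke constituent and the algebraic $w_D$ acting on differentials, but this is a standard feature of Shimura's construction that I would cite rather than redo.
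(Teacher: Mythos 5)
Your proof is correct, and it supplies exactly the argument the paper leaves implicit: the paper gives no proof of this lemma, saying only that it is ``a simple adaptation of Cremona's Lemma 5.6.2,'' and your computation (the identity $\lambda_D(g_i)\overline{\lambda}_D(g_i)=1$ coming from $|a_D(g_i)|^2=D$ for a weight-$2$ newform with primitive nebentypus, the resulting idempotents $\tfrac{1}{2}(1\pm w_D)$, the spanning argument, and the Galois swap forced by $w_D^\sigma=-w_D$) is precisely that adaptation, carried out completely.
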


Let $H_1(B_g, \Z)^{\pm}$ denote the $\pm$-eigenspaces of $w_D$. They
are free Hecke submodules of $H_1(B_g,\Z)$ of rank $4$ over $\Z$,
which are direct summands.

\begin{lem}\label{lem:shimura-decomposition} 
Let $\Lambda_g^{\pm}$ be the period lattices obtained by integrating
the forms in Lemma~\ref{lem:differential-forms} against $H_1(B_g,
\Z)^{\pm}$, and set $\Lambda_g = \Lambda_g^+ \oplus
\Lambda_g^-$. Then, there exist an abelian fourfold $B_g'$ defined
over $\Q$, and an isogeny $\phi: B_g' \to B_g$ whose degree is a power
of $2$, such that $B_g'(\C) = \C^4/\Lambda_g$. Moreover, $B_g' =
\mathrm{Res}_{F/\Q}(A_f)$ where $A_f$ is an abelian surface defined
over $F$.
\end{lem}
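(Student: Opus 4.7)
The plan is to establish the three assertions in succession: that $B_g' := \C^4/\Lambda_g$ is an abelian fourfold, that the natural inclusion $\Lambda_g \hookrightarrow \Lambda$ induces an isogeny $\phi$ to $B_g$ of $2$-power degree, and that $B_g'$ descends to $\Q$ as $\mathrm{Res}_{F/\Q}(A_f)$.

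The first observation is that $w_D$ is an involution on $B_g$: from the block shape of $W_D$ together with $|\lambda_D(g_i)|^2 = 1$ (a standard property of the pseudo-eigenvalue attached to a newform whose level equals its nebentypus conductor), one has $W_D^2 = I$. Consequently $H_1(B_g, \Z)$ decomposes, up to a sublattice of $2$-power index, as $H_1(B_g, \Z)^+ \oplus H_1(B_g, \Z)^-$. The key orthogonality is that for $\gamma \in H_1(B_g, \Z)^{\pm}$ and $\omega$ in the $\mp$-eigenspace of $w_D^*$, one computes $\int_\gamma \omega = \int_{w_D \gamma} \omega = \int_\gamma w_D^*\omega = -\int_\gamma \omega$, so $\int_\gamma \omega = 0$. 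With respect to the basis $\{h_1^+, h_2^+, h_1^-, h_2^-\}$ of $H^0(B_g \otimes_\Q F, \Omega^1)$ supplied by Lemma~\ref{lem:differential-forms}, the period map therefore sends $H_1(B_g, \Z)^\pm$ into the corresponding $\C^2$-summand of $\C^4$. Hence $\Lambda_g = \Lambda_g^+ \oplus \Lambda_g^-$ embeds as a sublattice of $2$-power index in the full period lattice $\Lambda$ of $B_g$, and the quotient map $\C^4/\Lambda_g \twoheadrightarrow \C^4/\Lambda = B_g(\C)$ is the desired $2$-power isogeny $\phi$.

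To upgrade $B_g'$ from a complex torus to an abelian variety, I would pull back the polarization of $B_g$ along $\phi$: the Riemann form on $\Lambda$ restricts to an alternating integral form on the finite-index sublattice $\Lambda_g$, while the Riemann positivity on $\C^4$ is unaffected. Thus $B_g'$ is algebraic and $\phi$ is a genuine isogeny of abelian varieties.

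The subtlest step is the descent to $\Q$ and the identification with the Weil restriction. Since $B_g/\Q$ while $w_D^\sigma = -w_D$, the nontrivial $\sigma \in \Gal(F/\Q)$ swaps the $\pm$-eigenspaces of $w_D$ in both homology and de Rham cohomology. Therefore $\sigma$ exchanges $\Lambda_g^+$ and $\Lambda_g^-$, and the decomposition $\Lambda_g = \Lambda_g^+ \oplus \Lambda_g^-$ carries a canonical $\Gal(F/\Q)$-descent datum that lifts to $B_g'$. Setting $A_f := \C^2/\Lambda_g^+$, this is identified with (a quotient of) the abelian surface $(1 + w_D)B_g$ which is defined over $F$ by Shimura's theorem, and $A_f^\sigma \simeq \C^2/\Lambda_g^-$. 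One then has $B_g' \otimes_\Q F \simeq A_f \times A_f^\sigma$ with $\sigma$ interchanging the two factors, so by the universal property of Weil restriction $B_g' \simeq \mathrm{Res}_{F/\Q}(A_f)$. The main obstacle I anticipate is verifying that the descent datum on the complex torus is compatible with a $\Q$-rational polarization — concretely, that the polarization of $B_g'$ pulled back via $\phi$ is $\sigma$-invariant — which should follow directly from the fact that the polarization on $B_g$ is already $\Q$-rational.
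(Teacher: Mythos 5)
Your proposal is correct and follows essentially the same route as the paper: the $2x=(x+w_Dx)+(x-w_Dx)$ trick for the $2$-power index, the restriction of the Riemann form from $B_g$ to make $\C^4/\Lambda_g$ algebraic, and the relation $w_D^\sigma=-w_D$ to see that the two eigen-pieces are $\Gal(F/\Q)$-conjugate surfaces over $F$. The one point where you package things differently is the $\Q$-rationality of $B_g'$: the paper gets the fourfold over $\Q$ and the isogeny $\phi$ directly from Shimura's Theorem~7.14 and Proposition~7.19 (Hecke-stability of the span of $h_1^{\pm},h_2^{\pm}$), whereas you build $\phi$ by hand as the quotient map of lattices and then descend via a Galois descent datum on $A_f\times A_f^\sigma$ — which still leans on Shimura's result that $(1+w_D)B_g$ is defined over $F$, so the two arguments have the same ultimate source. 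Your explicit verification that periods of $\mp$-eigenforms against $\pm$-cycles vanish (so that the period matrix is genuinely block-diagonal and $\Lambda_g$ sits inside the full period lattice) is a detail the paper leaves implicit, and is a worthwhile addition.
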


\begin{proof} 
We first note that the complex tori $\C^2/\Lambda_g^{\pm}$ and
$\C^4/\Lambda_g$ have canonical Riemann forms obtained by restriction
of the intersection pairing $\langle \cdot, \cdot\rangle$ on
$B_g$. Therefore, they are the complex points of some abelian
varieties. Since $h_1^{+}, h_2^{+}, h_1^{-}, h_2^{-}$ is a basis of
the Hecke constituent of $g$, \cite[Theorem~7.14 and
  Proposition~7.19]{shimura71} imply that there exist a fourfold
$B_g'$ defined over $\Q$, and an isogeny $\phi:\,B_g'\to B_g$, such
that $B_g'(\C) = \C^4/\Lambda_g$.

Next, let $x \in H_1(B_g, \Z)$, then we have
\[
2x = (x + w_D x ) + (x - w_D x) = y_{+} + y_{-} \in H_1(B_g, \Z)^+
\oplus H_1(B_g, \Z)^-.
\] 
Hence the exponent of $H_1(B_g, \Z)^+ \oplus H_1(B_g, \Z)^-$ inside
$H_1(B_g, \Z)$ divides $2$.  This implies that the degree of $\phi$ is
a power of $2$.

Since $w_D$ is defined over $F$ and $w_D^\sigma = - w_D$, the bases
$\{h_1^{+}, h_2^{+}\}$ and $\{h_1^{-}, h_2^{-}\}$ are
$\Gal(F/\Q)$-conjugate. Therefore $\C^2/\Lambda_g^+$ and
$\C^2/\Lambda_g^-$ are the complex points of some abelian surfaces
defined over $F$ that are Galois conjugate. Let $A_f$ be the surface
such that $A_f(\C) = \C^2/\Lambda_g^+$.  Then, we see that $B_g' =
\mathrm{Res}_{F/\Q}A_f$ by construction.
\end{proof}

In practice, we can replace $B_g$ by $B_g'$, and hence assume that
\[
H_1(B_g, \Z) = H_1(B_g, \Z)^+ \oplus H_1(B_g, \Z)^- = H_1(A_f, \Z)
\oplus H_1(A_f^\sigma, \Z).
\] 
The above integration then gives the period lattice decomposition
\[
\varOmega_{B_g} = \varOmega_{A_f} \times \varOmega_{A_f^\sigma} =
(\varOmega_1\,|\,\varOmega_2) \times
(\varOmega_1^\sigma\,|\,\varOmega_2^\sigma).
\]
Provided that the intersection pairing restricted to $H_1(A_f, \Z)$
and $H_1(A_f^\sigma, \Z)$ induces principal polarizations, we can
compute the surfaces $A_f$ and $A_f^\sigma$ as Jacobians of curves
$C_f$ and $C_f^\sigma$ (defined over $F$).

We illustrate this with the following example. The smallest
discriminant for which we obtain a surface which satisfies Case I is
$D=53$ (see Table \ref{table:eigenform-sqrt53}). The abelian surface
$A_f$ has real multiplication by (an order in) the field
$\Q(\sqrt{2})$. In fact, we will see that it has real multiplication
by the full ring of integers.

A symplectic basis for $H_1(B_g, \Z)$ is given by the modular
symbols~\cite{stein07}
\begin{small}
\begin{align*}
\gamma_1 &:= -\{-1/35, 0\} + \{-1/26, 0\},\\
\gamma_2 &:=  -\{-1/47, 0\},\\
\gamma_3 &:= \{-1/37, 0\},\\
\gamma_4 &:= \{-1/47, 0\}  - \{-1/15, 0\} + \{-1/13, 0\},\\
\gamma_5 &:= -\{-1/28, 0\},\\
\gamma_6 &:= -\{-1/44, 0\},\\
\gamma_7 &:= \{-1/15, 0\} - \{-1/44, 0\},\\
\gamma_8 &:= \{-1/28, 0\} + \{-1/21, 0\}  - \{-1/26, 0\}.
\end{align*}
\end{small}

Computing the matrix $G$ of the intersection pairing in that basis, we
see that $B_g$ is principally polarized. We obtain the integral bases
$\{\delta_1, \delta_2, \delta_3, \delta_4\}$ and $\{\delta_1',
\delta_2', \delta_3', \delta_4'\}$ for $H_1(B_g, \Z)^{+}$ and
$H_1(B_g, \Z)^{-}$, respectively, where
\begin{small}
\begin{align*}
\delta_1 &:= -\{-1/35, 0\} + \{-1/26, 0\},\\
\delta_2 &:= \{-1/37, 0\} - \{-1/47, 0\} + \{-1/15, 0\} - \{-1/13, 0\},\\
\delta_3 &:= - \{-1/28, 0\},\\
\delta_4 &:= - \{-1/28, 0\} + \{-1/15, 0\} - \{-1/44, 0\} - \{-1/21, 0\} + \{-1/26, 0\},\\
\delta_1' &:= -\{-1/47, 0\},\\
\delta_2' &:= \{-1/37, 0\} + \{-1/47, 0\}  - \{-1/15, 0\} + \{-1/13, 0\},\\
\delta_3' &:=-\{-1/44, 0\},\\
\delta_4' &:= \{-1/28, 0\} + \{-1/15, 0\} - \{-1/44, 0\} + \{-1/21, 0\}  - \{-1/26, 0\}.
\end{align*}
\end{small}

In this case, we verify that the index of $H_1(B_g, \Z)^+ \oplus
H_1(B_g, \Z)^-$ inside $H_1(B_g, \Z)$ is $4$, and that the restriction
of the intersection pairing to each direct summand $H_1(B_g,
\Z)^{\pm}$ is of type $(1, 2)$. This means that $A_f$ and $A_f^\sigma$
are not principally polarized with respect to the Riemann form given
by the restriction of the intersection pairing from $B_g$.  Let
$G^{\pm}$ be the corresponding matrices for these pairings. We remedy
this situation by finding a suitable element of the Hecke algebra, as
in \cite[Section 4.2]{gonzalez-guardia-rotger05}. The element $u =
-e-2 \in \CO_f$ has norm $2$, and acts on $H_1(B_g, \Z)^{\pm}$ as
$T_7^{\pm}$ where $T_7$ is the Hecke operator at $7$.  Letting
$G_u^{\pm}= T_7^{\pm}\cdot G^{\pm}$, we obtain principal polarizations
on $A_f$ and $A_f^\sigma$ by
\cite[Proposition~3.11]{gonzalez-guardia-rotger05}.

By integrating the bases of differential forms $\{h_1^{+}, h_2^{+}\}$
and $\{h_1^{-},h_2^{-}\}$ from Lemma~\ref{lem:differential-forms}
against the Darboux bases
\begin{align*}
\begin{pmatrix} \eta_1\\ \eta_2\\ \eta_3 \\ \eta_4\end{pmatrix} := 
\begin{pmatrix}0&0&1&2\\ 1 &-1 &0 &0\\ 0& 1 &0 &0\\ 0 &0 &0 &1 \end{pmatrix}
\begin{pmatrix} \delta_1\\ \delta_2\\ \delta_3 \\ \delta_4\end{pmatrix}\,\text{and}\,
\begin{pmatrix} \eta_1'\\ \eta_2'\\ \eta_3'\\ \eta_4'\end{pmatrix} := 
\begin{pmatrix} 1& 0& 0& 4\\ 0& 0& 0& 1\\ 0& 1& 0& 0\\ 0& 0& 1& 0\end{pmatrix}
\begin{pmatrix} \delta_1'\\ \delta_2'\\ \delta_3' \\ \delta_4'\end{pmatrix},
\end{align*}

respectively, we obtain the Riemann period matrices $\varOmega_{A_f}$
and $\varOmega_{A_f^\sigma}$, where
\begin{align*}
\varOmega_1 &:= \begin{pmatrix}
2.53595... + 2.39271...i& -4.32914... - 4.08462...i\\
-66.45185... - 24.43147...i& 19.46329... + 7.15581...i\\ 
\end{pmatrix},\\
\varOmega_2 &:= \begin{pmatrix}
1.79318... - 1.69190...i& 6.12233... - 5.77653...i\\
46.98855... - 17.27566...i& 27.52526... - 10.11984...i
\end{pmatrix},\\
\varOmega_1^\sigma &:= \begin{pmatrix}
-2.44814... + 4.22343...i& 2.44814... + 4.22343...i\\
 0.78506... + 1.10501...i& -0.78506... + 1.10501...i
\end{pmatrix},\\
\varOmega_2^\sigma &:= \begin{pmatrix}
1.43409... + 2.47403...i& -8.35849... + 14.41970...i\\
-2.68038... + 3.77277...i& 0.45988... + 0.64730...i
\end{pmatrix}. 
\end{align*}
This yields the normalized period matrices
\begin{align*}
Z &:= \begin{pmatrix}
-0.65878... + 0.69909...i& -0.40996... + 0.82303...i\\
-0.40996... + 0.82303...i& -0.32227... + 1.89394...i 
\end{pmatrix},\\
Z^\sigma &:= \begin{pmatrix}
-0.14337... + 1.54762...i& 1.99999... - 0.64475...i\\
 2.00000... - 0.64475...i& 0.14337... + 1.54762...i
\end{pmatrix}.
\end{align*}

We compute the Igusa--Clebsch invariants $I_2, I_4, I_6$ and $I_{10}$
to 200 decimal digits of precision using $Z$ and $Z^\sigma$, and
identify them as elements in $F$ (due to
Lemma~\ref{lem:shimura-decomposition}).  In the weighted projective
space $\Proj^2_{(1:2:3:5)}$, this gives the point
\begin{align*}
&(I_2:I_4:I_6:I_{10})=\\
&\quad\left(1: \frac{-21504b + 81889}{5973136}: \frac{-1241984b + 3114075}{1122949568}: 
\frac{1564843b + 21688699}{1362467130944816}\right),
\end{align*}

where $b = \sqrt{53}$. By using Mestre's algorithm~\cite{mestre} which
is implemented in \texttt{Magma}, we obtain a curve with above
invariants. We reduce this curve using the algorithm
in~\cite{bouyer-streng13} implemented in \texttt{Sage} \cite{sage} to
get the curve
\begin{align*}
C_f':\, y^2 &= (-6w + 25)x^6 + (-60w + 246)x^5 + (-242w + 1017)x^4\\
&\qquad + (-534w + 2160)x^3 + (-626w + 2688)x^2\\
&\qquad + (-440w + 1724)x - 127w + 567.
\end{align*}
We have used floating point calculations to get the equation of the
curve $C_f'$, but now we can directly check that the Frobenius data of
its Jacobian matches that of the Hilbert modular form, up to quadratic
twist.

\begin{rem}
We computed the curve $C'_f$ by using the normalized period matrix
$Z$. We could have instead applied the Jacobian nullwerte
method~\cite{gonzalez-guardia02, guardia02} to the periods matrices
$\varOmega_{A_f}$ and $\varOmega_{A_f^\sigma}$. This has the advantage
of producing curves with small coefficients, needing no further
reduction.
\end{rem}

\begin{rem} 
For the other Hilbert modular forms in Case I, we obtained the
corresponding abelian surfaces using Method 1. The only exception is
$D = 61$, where the abelian surface has RM by $\Z[\sqrt{3}]$ and is
naturally $(1,2)$-polarized, and is therefore not principally
polarizable by \cite[Corollary~2.12 and
  Proposition~3.11]{gonzalez-guardia-rotger05}; it is not treated in
this paper.
\end{rem}

\subsubsection{\bf Method 2} \label{subsubsec:method2}

\begin{table}[h]
\caption{The first few Hecke eigenvalues of a base change newform of
  level $(1)$ and weight $2$ over $\Q(\sqrt{53})$. Here $e =
  \sqrt{2}$.}
\label{table:eigenform-sqrt53}
\begin{tabular}{ >{$}c<{$}   >{$}r<{$}   >{$}r<{$}   >{$}r<{$}  >{$}r<{$} }
\toprule
\mathrm{N}\gp&\gp&a_{\gp}(f)& s_{\gp}(f)& t_{\gp}(f)\\
\midrule
4 & 2 & e + 1 & 2 & 7 \\
7 & -w - 2 & -e - 2 & -4 & 16\\
7 & -w + 3 & -e - 2 & -4 & 16\\
9 & 3 & -3e + 1 &  2& 1\\ 
11 & w - 2 & 3e & 0 & 4\\ 
11 & w + 1 & 3e & 0 & 4\\ 
13 & w - 1 & -2e + 1 &  2 & 19\\ 
13 & -w & -2e + 1 &  2 & 19\\ 
17 & -w - 5 & -3 & -6 & 43\\ 
17 & w - 6 & -3 & -6 & 43\\ 
25 & 5 & 2e + 4 &  8 & 58\\ 
29 & -w - 6 & 3e - 3 & -6 & 49\\ 
29 & w - 7 & 3e - 3 & -6 & 49\\ 
\bottomrule
\end{tabular}
\end{table}

An equation for the Hilbert modular surface $Y_{-}(8)$ is given
in~\cite{elkies-kumar12} (see \ref{subsec:hms} for a quick review of
the results we need here). As a double-cover of $\mathbf{P}^2_{r, s}$,
it is given by
\[
z^2 = 2(16rs^2+32r^2s-40rs-s+16r^3+24r^2+12r+2).
\] 
It is a rational surface (even over $\Q$) and therefore the rational
points are dense. In particular, there is an abundance of rational
points of small height. The Igusa--Clebsch invariants $(I_2: I_4: I_6:
I_{10}) \in \Proj^2_{(1:2:3:5)}$ are given by
\[
\left( -\frac{24B_1}{A_1},-12A,\frac{96AB_1-36A_1B}{A_1},-4A_1B_2 \right),
\]
where
\begin{align*}
A_1 &= 2rs^2, \\
A &= -(9rs+4r^2+4r+1)/3, \\
B_1 &= (rs^2(3s+8r-2))/3, \\
B &= -(54r^2s+81rs-16r^3-24r^2-12r-2)/27, \\
B_2 &=  r^2.
\end{align*}

Recall that we expect to find a point of $Y_{-}(8)$ over $F =
\Q(\sqrt{53})$, corresponding to the principally polarized abelian
surface $A$ which should match the Hilbert modular form $f$.  We first
make a list of all $F$-rational points of height $\le 200$ on the
Hilbert modular surface. Next, for each of these rational points, we
try to construct the corresponding genus $2$ curve $C$ over $F$, whose
Jacobian corresponds to the moduli point $(r,s)$ we have chosen, and
check whether the characteristic polynomial of Frobenius on its first
\'etale cohomology group matches up the polynomial $Q_\gp(T)$ giving
the corresponding Euler factor of surface $A_f$ attached to the
Hilbert modular form. If a candidate point $(r,s)$ passes this test
for say the first $50$ primes (ordered by norm) of $F$ of good
reduction for $f$ and $A = J(C)$, we can be reasonably convinced that
it is the correct curve, and then try to prove that $A$ is associated
to $f$.

There are two subtleties in the search. First, since the Hilbert
modular surface $Y_{-}(D')$ is only a coarse moduli space, the point
$(r,s)$ is not enough to recover the curve up to $F$-isomorphism. The
Igusa--Clebsch invariants are rational functions in $r$ and $s$, and
they are only enough to pin down $C$ up to quadratic twist. Therefore,
when we match the quartic $L$-factors $L_\gp(A,T)$ and $Q_\gp(T)$, we
need to allow for
\[
L_\gp(A,\pm T) = Q_\gp(T)
\]
rather than just the plus sign. Second, the Igusa--Clebsch invariants
do not always allow us to define $C$ over the base field $F$; there is
often a Brauer obstruction. Even when $C$ is definable over $F$ (which
is the case we are interested in), it can be computationally expensive
to do so. Therefore, it is convenient to speed up the process of
testing compatibility with $f$ by first reducing $(I_2, I_4, I_6,
I_{10})$ modulo $\mathfrak{p}$ (assuming good reduction) and then
producing a curve $D_\gp$ over $\F_q$ from these reduced invariants,
where $q = \mathrm{N}\mathfrak{p}$. If $C$ exists over $F$, then its
reduction $C_\gp$ will be the same as $D_\gp$ up to quadratic
twist. The advantage is that the Brauer obstruction vanishes over the
finite field $\F_q$, making it very easy to check compatibility at
$\gp$.

In this particular example, a search of $Y_{-}(8)$ for all points of
height $\le 200$ using~\cite{doyle-krumm13} (implemented in
\texttt{Sage}) gives the parameters
\[
r =-\frac{24+10w }{11^2},\,\, s =\frac{136-24w}{11^2},
\]
and the Igusa--Clebsch invariants
 \begin{align*}
I_2&= 208+88w,\\
I_4&= -1660-588w,\\
I_6&= - 428792-135456w,\\
I_{10}&= 643072+204800w.
\end{align*}
This leads to the same curve $C_f'$ as above.

By further reducing the curve we obtained by either of Methods 1 or 2,
we get the following.

\begin{thm}\label{thm:ex-sqrt53} 
Let $C = C_f: y^2+Q(x)y=P(x)$ be the curve over $F$, where
\begin{align*}
P &:= -4x^6 + (w - 17)x^5 + (12w - 27)x^4 + (5w - 122)x^3\\
&\qquad + (45w - 25)x^2 + (-9w - 137)x + 14w + 9, \\
Q &:= wx^3 + wx^2 + w + 1.
\end{align*}
Then
\begin{enumerate}
\item[(a)] The discriminant of this curve is
  $\Delta_C=-\epsilon^{7}$. Thus $C$ has everywhere good reduction.
\item[(b)] The surface $A:=J(C)$ is modular and corresponds to the
  unique Hecke constituent $[f]$ in $S_2(1)$, the space of Hilbert
  cusp forms of weight $2$ and level $(1)$ over $F=\Q(\sqrt{53})$.
\end{enumerate}
\end{thm}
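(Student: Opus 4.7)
The plan is to treat the two parts quite differently: (a) is essentially a mechanical computation, while (b) reduces to an isogeny claim for which we already have one side (the surface $A_f$ from classical Eichler--Shimura) in hand by virtue of being in Case I.

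For part (a), I would apply the standard formula for the discriminant of a genus-$2$ curve written in the form $y^2 + Q(x)y = P(x)$, namely $\Delta_C = 2^{-12}\,\mathrm{disc}(4P+Q^2)$ (up to the usual sign/power-of-$2$ normalization). With $P,Q$ as given, this is a polynomial computation in $\CO_F$ which should yield $\pm\epsilon^{7}$ after simplification, where $\epsilon$ is the fundamental unit of $F=\Q(\sqrt{53})$. Since $\epsilon \in \CO_F^\times$, the model is integral with unit discriminant, so every completion $F_\gp$ gives a smooth reduction; hence $C/F$ has everywhere good reduction, and so does $A=J(C)$.

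For part (b), the strategy is to identify $A=J(C)$ with the surface $A_f$ whose existence is guaranteed by the Case~I discussion (via $B_g$ and the Atkin--Lehner involution $w_{53}$). Concretely, I would compare the compatible systems of $\ell$-adic Galois representations $\rho_{A,\ell}$ and $\rho_{A_f,\ell}$ of $\Gal(\overline F/F)$; by Faltings, if the associated $L$-functions agree, then $A\sim A_f$ over $F$, which gives the claimed modularity. Both representations are everywhere unramified (by (a) on one side, by the base-change construction on the other), so they factor through $\Gal(\overline F/F)$ with trivial conductor. One then applies the Faltings--Serre--Livn\'e criterion: a finite list of Frobenius traces $s_\gp$ and norms $t_\gp$ (computed from the reductions $C_\gp$ of $C$ and read off from $f$ and $f^\tau$ in Table~\ref{table:eigenform-sqrt53}) suffices to force isomorphism of the semisimplified residual representations mod a chosen small prime $\ell$, and then deformation-theoretic input upgrades this to equality of the full compatible systems. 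Along the way one must verify residual irreducibility and surjectivity of the image of $\rho_{A,\ell}$ onto $\GL_2(\CO_f/\ell)$ for some small $\ell$, which pins down the effective Chebotarev bound and the finite list of primes at which matching must be checked.

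The main obstacle is this last step: rigorously bounding the finite set of primes at which matching of Euler factors is required, and certifying the residual-image hypothesis for the Faltings--Serre--Livn\'e input. In practice one picks $\ell=2$ or $\ell=3$ so that $\CO_f/\ell = \F_2[e]/(e^2) = \F_2[e]$ is a manageable small coefficient ring (here $\CO_f = \Z[\sqrt 2]$), computes the $2$-torsion field of $J(C)$ explicitly, verifies its Galois group is large enough (e.g. non-solvable or contains a transvection), and then exhibits a set $T$ of unramified primes whose Frobenius classes generate the quotient by the relevant subgroup of $\GL_2(\CO_f/\ell)\times \GL_2(\CO_f/\ell)$ that governs deformations with the same trace. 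A check that $s_\gp$ and $t_\gp$ agree for $\gp \in T$, which one reads off from Table~\ref{table:eigenform-sqrt53} and a direct point-count on $C_\gp$, then completes the proof. The remaining assertion that $[f]$ is the \emph{unique} Hecke constituent of $S_2(1)$ over $\Q(\sqrt{53})$ follows from a Magma computation in the Hilbert Modular Forms Package, which outputs the full decomposition of $S_2(1)$.
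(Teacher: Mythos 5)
Your part (a) matches the paper: the discriminant is computed directly from the integral model and found to be $-\epsilon^{7}$, a unit, whence good reduction everywhere. For part (b), however, you take a genuinely different route from the paper. You propose to compare the compatible system of $\lambda$-adic representations of $J(C)$ with that of the surface $A_f$ already furnished by the Case~I (base-change) discussion, via Faltings--Serre--Livn\'e applied to a finite, effectively determined set of Frobenius traces, and then invoke Faltings' isogeny theorem. This is a legitimate and standard strategy, and it is available precisely because $f$ is a base change, so a known-modular surface $A_f$ exists to compare against; its cost is the bookkeeping of residual images and the trace-comparison set, and you should note that your suggested choice $\ell=2$ is delicate: $2$ ramifies in $\CO_f=\Z[\sqrt{2}]$, so the residue field at $\lambda=(\sqrt 2)$ is $\F_2$ (not $\F_2[e]/(e^2)$, which is not a field), and since some residual traces are odd (e.g.\ $a_{(2)}(f)=e+1\equiv 1$), Livn\'e's even-trace criterion does not apply directly and one needs the full Faltings--Serre argument with nontrivial residual image. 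The paper instead proves modularity of $J(C)$ directly by a lifting theorem: it takes a prime $\lambda$ above $7$ (which splits in $\Z[\sqrt 2]$), exhibits an explicit $F$-rational $7$-torsion point on $J(C)$ as a degree-zero divisor $D$ with $7D\sim(0)$, concludes that $\overbar{\rho}_{A,\lambda}$ is \emph{reducible}, and applies Skinner--Wiles for residually reducible representations; the identification with $[f]$ then follows because $S_2(1)$ over $\Q(\sqrt{53})$ is two-dimensional and spanned by the single constituent $[f]$. The paper's argument avoids the Faltings--Serre computation entirely and does not logically depend on first constructing $A_f$; yours avoids the modularity-lifting machinery but requires the Case~I construction and careful control of residual images. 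Both reach the same conclusion.
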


\begin{proof} 
A direct calculation shows that $\Delta_C=-\epsilon^{7}$.  By
construction, $A$ has real multiplication by $\CO_f=\Z[\sqrt{2}]$,
where $7$ is split.  Let $\lambda$ be one of the primes above $7$, and
consider the $\lambda$-adic representation
\[
\rho = \rho_{A, \lambda}:\,\Gal(\Qbar/F)\to
\GL_2(K_{f,\lambda})\simeq\GL_2(\Q_7),
\] 
and its reduction $\overbar{\rho}$ modulo $\lambda$. We will show that
$\rho$ is modular by using~\cite[Theorem A]{skinner-wiles99}. For
this, it suffices to show that $\overbar{\rho}$ is reducible or,
equivalently, that $A$ has a $7$-torsion point defined over $F$.  By
definition, we have
\[
A(F) \simeq \mathrm{Pic}^0(C)(F).
\]
So it is enough to find a degree zero divisor $D$ defined over $F$
such that $7D$ is principal.  To this end, we consider the field $L =
F(\alpha)$, where $\alpha$ is a root of the polynomial $x^2 - wx +
3$. Let $\sigma'\in \Gal(L/F)$ be the non-trivial involution.  Then,
the point $P = \left(\alpha, (-6w - 12)\alpha + 2w + 18\right)\in
C(L)$, and the divisor $D := P + \sigma'(P) - 2\infty$ belongs to
$\mathrm{Pic}^0(C)(F)$.  An easy calculation shows that $7D \sim
(0)$. Hence, $D$ corresponds to a point of order $7$ in $A(F)$.

Since $S_2(1)$ has dimension $2$ and is spanned by $[f]$, $A$ must
correspond to this Hilbert newform.
\end{proof}

\begin{rem}
Both $C$ and $A$ have everywhere good reduction. However, this is not
true in some of the other examples. Indeed, it can happen that a curve
$C$ has bad reduction at a prime $\gp$ while $\mathrm{Jac}(C)$ does
not. (See the example of Theorem~\ref{thm:ex-sqrt929}.)
\end{rem}

\begin{rem}
The modularity of the abelian surface $A = \mathrm{Jac}(C)$ we found
means that it is isogenous to the surface $A_f$ obtained from the
Eichler--Shimura construction over $\Q$. Since $A_f$ is a
$\Q$-surface, so is $A$. In fact, the proof of the reducibility of
$\overbar{\rho}_{A,\lambda}$ implies that $A$ and its Galois conjugate are
related by a $7$-isogeny.
\end{rem}

\subsection{Case II} 
The following result explains the connection between
Conjectures~\ref{conj:eichler-shimura} and~\ref{conj:brumer-kramer}.

\begin{prop}\label{prop:bk-implies-es} 
Assume that Conjecture~\ref{conj:brumer-kramer} is true. Let $F$ be a
real quadratic field. Let $f$ be a Hilbert newform of weight $2$ and
level $\gN$ over $F$, which satisfies the hypotheses of Case II.  Then
$f$ satisfies Conjecture~\ref{conj:eichler-shimura}.
\end{prop}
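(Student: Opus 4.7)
The strategy is to transfer $f$ to a Siegel paramodular form $g$ over $\Q$ via automorphic induction, invoke Conjecture~\ref{conj:brumer-kramer} to obtain an abelian surface $B/\Q$, and then produce $A_f$ as the base change $B \otimes_\Q F$ together with real multiplication acquired over $F$.

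The first step is to build $g$. Start from the cuspidal automorphic representation $\pi_f$ of $\GL_2(\mathbb{A}_F)$ and apply automorphic induction (Arthur--Clozel, Ramakrishnan) to obtain a cuspidal automorphic representation $\Pi$ of $\GL_4(\mathbb{A}_\Q)$ with $L$-function $L(\Pi, s) = L(f, s)\,L(f^\tau, s)$ as a degree-four Euler product over primes of $\Q$. The Case II hypothesis $f^\sigma = f^\tau$ forces $\Pi$ to be essentially self-dual of symplectic type; via the theory of endoscopy and the Langlands functorial transfer corresponding to the embedding $\GSp_4 \hookrightarrow \GL_4$, the representation $\Pi$ descends to a cuspidal automorphic representation on $\GSp_4(\mathbb{A}_\Q)$ which admits paramodular new vectors at the relevant finite primes. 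This yields a Siegel paramodular newform $g$ of genus $2$, weight $2$, and level a divisor of $D^2\,\mathrm{N}_{F/\Q}(\gN)$ (or some explicit product involving the discriminants of $F$ and of the inner twist).

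Next, verify that $g$ has rational integer Hecke eigenvalues and is not a Gritsenko lift. Using $f^\sigma = f^\tau$, for a prime $p$ split in $F$ as $\gp\gp^\sigma$ we have $a_p(g) = a_\gp(f) + a_{\gp^\sigma}(f) = a_\gp(f) + a_\gp(f)^\tau = \mathrm{Tr}_{K_f/\Q}(a_\gp(f)) \in \Z$; for $p$ inert, $a_p(g) = 0$; and the ramified primes are handled similarly. Gritsenko lifts correspond to Hilbert forms that are base changes from $\Q$, and the Case II hypothesis $f^\sigma \neq f$ precludes this, so $g$ is genuinely new of paramodular type. Now invoke Conjecture~\ref{conj:brumer-kramer} to produce an abelian surface $B/\Q$ with $\End_\Q(B) = \Z$, conductor equal to the level of $g$, and $L(B, s) = L(g, s)$.

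Finally, set $A_f := B \otimes_\Q F$. A prime-by-prime comparison of Euler factors shows $L(A_f, s) = L(f, s)\,L(f^\tau, s)$: at a split prime $p = \gp\gp^\sigma$, the quartic $L_p(B, s)^{-1}$ factors as $(1 - a_\gp(f)T + pT^2)(1 - a_\gp(f)^\tau T + pT^2)$ where $T = p^{-s}$, matching the product of the local factors of $L(A_f, s)$ at $\gp$ and at $\gp^\sigma$; the inert and ramified cases follow from analogous identities involving $\mathrm{Frob}_p^2$. The real multiplication on $A_f$ over $F$ comes for free from the Faltings isogeny theorem: the decomposition $\rho_{B, \ell}|_{G_F} \otimes_{\Q_\ell} K_{f,\lambda} \cong \rho_{f,\lambda} \oplus \rho_{f^\tau, \lambda}$ of $G_F$-representations is effected by a commuting action of $K_{f,\lambda}$, which by Faltings must come from an embedding $\CO_f \hookrightarrow \End_F(A_f)$.

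The main obstacle is the automorphic construction in the first step. While automorphic induction to $\GL_4$ is standard, pinning down the descent to $\GSp_4$ together with the precise paramodular level structure at the primes dividing $D$ and $\gN$ is delicate, and ultimately what makes Conjecture~\ref{conj:brumer-kramer} applicable. The remaining steps, once $g$ is in hand, are comparatively routine local computations and an application of Faltings.
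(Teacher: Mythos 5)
Your proposal is correct and follows essentially the same route as the paper: the authors obtain the paramodular Siegel newform $g$ of weight $2$ and level $ND^2$ by citing the Main Theorem of Johnson-Leung and Roberts, which is exactly the transfer-plus-descent step you flag as the main obstacle, then use $a_{\gp^\sigma}(f)=a_{\gp}(f)^\tau$ to see that $g$ has integer Hecke eigenvalues, invoke Conjecture~\ref{conj:brumer-kramer}, and base change the resulting surface back to $F$. One small inaccuracy in your write-up: Gritsenko lifts are lifts of elliptic (Jacobi) cusp forms, not of base-change Hilbert forms, and the correct reason $g$ avoids them is that the transfer of a non-base-change Hilbert newform is cuspidal of general type.
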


\begin{proof}
Since $f$ is a non-base change,~\cite[Main
  Theorem]{johnson-leung-roberts12} implies that there is a
paramodular Siegel newform $g$ of genus $2$, level $ND^2$ and weight
$2$ attached to $f$, where $N=\mathrm{N}_{F/\Q}(\gN)$. Moreover, since
$\Gal(F/\Q)$ preserves $\{f, f^\tau\}$, we must have
\[
a_{\gp^\sigma}(f) =a_{\gp}(f)^\tau
\]
for all primes $\gp \subseteq \CO_F$. Therefore, the Hecke eigenvalues
of the form $g$ are integers. So by
Conjecture~\ref{conj:brumer-kramer}, there is an abelian surface $B_g$
defined over $\Q$ with $\End_{\Q}(B_g)=\Z$ such that $L(B_g, s)=L(g,
s)$. Let $A_f$ be the base change of $B_g$ to $F$. Then, by
construction, we have
\[
L(A_f, s) = L(f, s) L(f^\tau, s).
\] 
Hence, $A_f$ satisfies Conjecture~\ref{conj:eichler-shimura}.
\end{proof}

\begin{rem}
Assume Conjecture~\ref{conj:brumer-kramer}. By
Proposition~\ref{prop:bk-implies-es}, if $A_f$ is an abelian surface
attached to a Hilbert newform $f$ satisfying Case II, then $A_f$ is
the base change to $F$ of some surface $B$ defined over $\Q$, which
acquires extra endomorphisms. Therefore, we know that the
Igusa--Clebsch invariants of $A_f$ are in $\Q$, and we can use this
fact in looking for $A_f$.
\end{rem}

The first real quadratic field of narrow class number $1$ where there
is a form $f$ of level $(1)$ and weight $2$, which satisfies Case II,
is $F=\Q(\sqrt{193})$ (see Table~\ref{table:eigenform-sqrt193}). The
coefficients of $f$ generate the ring of integers
$\CO_f:=\Z[\frac{1+\sqrt{17}}{2}]$ of the field $K_f=\Q(\sqrt{17})$.

\begin{table}[h]
\caption{The first few Hecke eigenvalues of a non-base change
  newform of level $(1)$ and weight $2$ over $\Q(\sqrt{193})$. 
  Here $e = (1 + \sqrt{17})/2$.}
\label{table:eigenform-sqrt193}
\begin{tabular}{ >{$}c<{$}   >{$}r<{$}   >{$}r<{$}   >{$}r<{$}  >{$}r<{$} }
\toprule
\mathrm{N}\gp&\gp&a_{\gp}(f)& s_{\gp}(f) & t_{\gp}(f)\\
\midrule
 2& 9w - 67& e& 1 & 0 \\
2& 9w + 58& -e + 1& 1 & 0 \\
3& -2w + 15& e&  1 & 2 \\
3& 2w + 13& -e + 1& 1 & 2 \\
7& -186w - 1199& -e + 2& 3 & 12 \\
7& 186w - 1385& e + 1&  3 & 12 \\
23& 38w - 283& -e - 6& - 13 & 84 \\
23& -38w - 245& e - 7& - 13 & 84 \\
25& 5& 1& 2 & 51 \\
31& -16w - 103& e - 3& - 5 & 64 \\
31& -16w + 119& -e - 2& - 5 & 64 \\
43& 4w + 25 &  e+4& 9 & 102\\
43& -4w + 29& -e+5& 9 & 102\\
\bottomrule
\end{tabular}
\end{table}

\begin{thm}\label{thm:ex-sqrt193} 
Let $C: y^2+Q(x)y=P(x)$ be the curve over $F$, where
\begin{align*}
P(x)&:=2x^6 + (-2w + 7)x^5 + (-5w + 47)x^4 + (-12w +
85)x^3\\ &{}\qquad + (-13w + 97)x^2 + (-8w + 56)x - 2w +
1,\\ Q(x)&:=-x-w.
\end{align*}
Then
\begin{enumerate}
\item[(a)] The discriminant $\Delta_C=-1$, hence $C$ has everywhere
  good reduction.
\item[(b)] The surface $J(C)$ is modular and corresponds to the form
  $f$ listed in Table~\ref{table:eigenform-sqrt193}.
\end{enumerate}
\end{thm}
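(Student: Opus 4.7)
The plan mirrors the two-part structure of Theorem~\ref{thm:ex-sqrt53}. For part (a), I would complete the square in $y$ by the substitution $y \mapsto y - Q(x)/2$, reducing $C$ to a sextic model $y^2 = P(x) + Q(x)^2/4$; the discriminant $\Delta_C$ is then, up to a power-of-$2$ unit, the discriminant of this sextic, and a direct polynomial calculation in $\CO_F = \Z[w]$ should yield $\Delta_C = -1$. Since $\Delta_C \in \CO_F^{\times}$, the model is minimal at every finite prime of $F$, so $C$, and hence $A := J(C)$, has everywhere good reduction.

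For part (b), I would imitate the Skinner--Wiles strategy used in the $\sqrt{53}$ example. By construction (via the point on $Y_{-}(17)$ whose Igusa--Clebsch invariants we recovered from Method~2), the surface $A$ has real multiplication by $\CO_f = \Z[(1+\sqrt{17})/2]$, so for each prime $\lambda$ of $\CO_f$ one obtains a two-dimensional $\lambda$-adic representation $\rho_{A,\lambda}\colon \Gal(\Qbar/F)\to \GL_2(K_{f,\lambda})$. Independently, the quaternionic methods of \cite{dembele-voight12} allow one to enumerate the Hecke constituents of $S_2(1)$ over $F$, and checking a few eigenvalues against the quartics $Q_\gp(T)$ built from Table~\ref{table:eigenform-sqrt193} pins down $[f]$ as the unique candidate that could correspond to $A$. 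It therefore suffices to prove that $A$ is modular. To do so, I would look for a rational prime $\ell$ that splits in $\CO_f$ and an explicit $F$-rational $\ell$-torsion point on $A$; then the residual representation $\overline{\rho}_{A,\lambda}$ will be reducible for one of the primes $\lambda \mid \ell$, and \cite[Theorem A]{skinner-wiles99} will lift this residual reducibility to the modularity of $\rho_{A,\lambda}$, and thence of $A$.

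The main obstacle is producing the torsion point. Following the $\sqrt{53}$ template, I would search small quadratic or cubic extensions $L/F$ for points $P\in C(L)$ such that the degree-zero divisor $P + \sigma'(P) - 2\infty$, with $\sigma'$ generating $\Gal(L/F)$, has small order in $\mathrm{Pic}^0(C)(F)$. This is where the $\sqrt{193}$ case is genuinely harder: $f$ is not a base change, so there is no classical companion form to suggest either the correct $\ell$ or a natural auxiliary field $L$, and one may have to try several candidates guided only by Mazur-style heuristics on which small torsion orders can appear on abelian surfaces with RM. Once a suitable torsion divisor is exhibited, comparison of Euler factors for sufficiently many good primes of $F$ completes the identification of $J(C)$ with the conjectural $A_f$.
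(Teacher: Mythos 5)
Part (a) of your plan is essentially what is done: one computes the (correctly normalized) discriminant of the Weierstrass model $y^2+Qy=P$ directly and finds $\Delta_C=-1$. One caveat: $2$ is not a unit in $\CO_F$ (it splits in $\Q(\sqrt{193})$), so ``up to a power-of-$2$ unit'' is not quite right; you must use the integral discriminant of the $(P,Q)$-model itself (the normalization $2^{-12}\mathrm{disc}_x(Q^2+4P)$), since good reduction at the primes above $2$ is exactly the point that would be lost by passing carelessly to the twisted model $y^2=4P+Q^2$.

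Part (b) contains a genuine gap: the Skinner--Wiles route via an $F$-rational $\ell$-torsion point is not available for this curve, and no amount of searching for the torsion divisor will fix that. If $\overbar{\rho}_{A,\lambda}$ were reducible for $\lambda\mid\ell$, then (level $(1)$, narrow class number $1$) one would need $a_\gp(f)\equiv 1+\mathrm{N}\gp \pmod{\lambda}$ for all $\gp$. Already at the primes above $2$ this forces $\lambda\mid(e-3)$ or $\lambda\mid(-e-2)$ with $e=(1+\sqrt{17})/2$, and these elements have norms $-2$ and $2$; so the residual representation can only be reducible at primes above $2$, where \cite[Theorem A]{skinner-wiles99} does not apply ($p$ must be odd). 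Moreover the smallest odd prime that splits in $\Q(\sqrt{17})$ is $13$, and a rational $13$-torsion point on $A(F)$ is not to be expected. The $\sqrt{53}$ example is special precisely because $f$ is a base change and $A$ is a $\Q$-surface related to its conjugate by a $7$-isogeny; that structural source of residual reducibility is absent here.

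The actual argument goes the opposite way, through irreducibility. One takes $\ell=3$, which is \emph{inert} in $K_f=\Q(\sqrt{17})$, so that $\overbar{\rho}_{A,3}$ lands in $\GL_2(\F_9)$; computing orders of Frobenius for a few primes shows this residual representation is surjective, hence absolutely irreducible (also after restriction to $G_{F(\zeta_3)}$). Residual modularity then comes from Ellenberg's results on Serre's conjecture over $\F_9$ \cite[Theorem 3.2 and Proposition 3.4]{ellenberg05}, after checking the local conditions (odd ramification indices at the primes above $3$ and $5$, trivial image of inertia at $(5)$), and one concludes with Gee's modularity lifting theorem \cite{gee06,gee09}. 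So you have correctly identified that a residual-modularity-plus-lifting argument is needed, but you chose the one residual input (reducibility) that provably cannot occur here at an odd prime; the irreducible/surjective case is the one that must be, and is, exploited.
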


\begin{rem}
A theorem of Stroeker~\cite{stroeker83} implies\footnote{Stroeker's
  result is stated for imaginary quadratic fields. Elkies
  \cite{elkies-db} remarks that the argument implies the statement
  above for real quadratic fields.} that if $E$ is an elliptic curve
defined over a real quadratic field $F$ having good reduction
everywhere, then $\Delta_E\notin \{-1,1\}$. However, this fails for
curves of genus $2$, by the above example.
\end{rem}

\begin{proof} 
We show that $\Delta_C=-1$ as before, which implies that $C$ and
$J(C)$ both have everywhere good reduction.  However, it is important
to observe that we located the curve based on our heuristics which
rely on Conjectures~\ref{conj:eichler-shimura}
and~\ref{conj:brumer-kramer}. Indeed, let $S_2(1)$ be the space of
Hilbert cuspforms of level $(1)$ and weight $2$ over
$F=\Q(\sqrt{193})$.  Then $S_2(1)$ has dimension $9$, and decomposes
into two Hecke constituents of dimension $2$ and $7$ respectively. The
form $f$ in Table~\ref{table:eigenform-sqrt193} is an eigenvector in
the $2$-dimensional constituent, and it is a non-base change whose
Hecke constituent is Galois stable. So we can look for our surface
$A_f$ with the help of Proposition~\ref{prop:bk-implies-es}.

To find the curve $C$, we proceed as in Sect.~\ref{subsubsec:method2},
using the results from \cite{elkies-kumar12}. The surface $Y_{-}(17)$
is a double-cover of the (weighted) projective space $\mathbf{P}_{g,
  h}^2/\Q$ given by
\begin{align*}
z^2 &= -256\,h^3 + (192\,g^2+464\,g+185)\,h^2 \\
    &{} \qquad-2\,(2\,g+1)\,(12\,g^3-65\,g^2-54\,g-9)\,h +
    (g+1)^4\,(2\,g+1)^2.
\end{align*}

A search for $\Q$-rational points of low height on this surface yields
the following parameters, Igusa--Clebsch and $G_2$ invariants:
\begin{align*}
g& = 0, h = -1/4,\\
I_2 &= 40,\, I_4= -56,\, I_6 = -669,\, I_{10} = -4,\\
j_1 &= -3200000,\, j_2 = -208000,\, j_3 = -16400.
\end{align*}
Over $\Q$, this gives the curve
\[
C':\, y^2=-8x^6 + 220x^5 - 44x^4 - 14828x^3 - 4661x^2 - 21016x +
10028.
\]
After finding a suitable twist and reducing the Weierstrass equation,
we get the curve $C$ displayed in the statement of the theorem.

\medskip

To prove modularity, we note that $3$ is inert in $K_f=\Q(\sqrt{17})$,
and consider the $3$-adic representation attached to $A$,

\begin{align*}
\rho_{A,3}:\,\Gal(\Qbar/F)&\to \GL_2(K_{f,(3)})\simeq\GL_2(\Q_9).
\end{align*}
By computing the orders of Frobenius for the first few primes, we see
that the $\bmod\, 3$ representation
  \begin{align*}
    \overbar{\rho}_{A,3}:\,\Gal(\Qbar/F)&\to\GL_2(\F_9)
  \end{align*}
is surjective, and absolutely irreducible. Hence $\rho_{A, 3}$ is also
absolutely irreducible.  Since $3$ and $5$ are unramified in the
quadratic field $F$, the ramification indices of $\overbar{\rho}_{A,3}$ at
the primes of $F$ above them are odd. Also, since $\overbar{\rho}_{A, 3}$
is unramified at $(5)$, the image of the inertia group at $I_{(5)}$ at
$5$ in $\mathrm{GL}_2(\F_9)$ is trivial. In particular, the image of
$I_{(5)}$ has odd order and lies in $\mathrm{SL}_2(\F_9)$. By studying
the Tate module of $A\times_F F(\zeta_3)$, we also see that
$\overbar{\rho}|_{G_{F(\zeta_3)}}$ is absolutely irreducible.  Therefore,
$\overbar{\rho}_{A, 3}$ is modular by \cite[Theorem 3.2 and Proposition
  3.4]{ellenberg05}. We then apply \cite[Theorem 1.1 in
  Erratum]{gee06, gee09} to conclude that $\rho_{A, 3}$ is modular.
So, $A$ is modular and corresponds to the unique newform $f \in
S_2(1)$ with coefficients in $\CO_f=\Z[\frac{1+\sqrt{17}}{2}]$.
\end{proof}

\begin{cor}\label{cor:bk-evidence}
Let $B$ be the Jacobian of the curve $C'/\Q$ in the proof of
Theorem~\ref{thm:ex-sqrt193}. Then $B$ is paramodular of level
$193^2$.
\end{cor}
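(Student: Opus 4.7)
The plan is to combine the modularity of $A_f = J(C)$ from Theorem~\ref{thm:ex-sqrt193} with the Johnson--Leung--Roberts transfer, by identifying the $\ell$-adic Galois representation of $B$ as an induction from $G_F$. Fix an auxiliary rational prime $\ell$ that splits in $\CO_f = \Z[(1+\sqrt{17})/2]$ (for instance $\ell = 2$), and let $\lambda$ be one of the two primes of $\CO_f$ above $\ell$.

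First I would note that $C$ was obtained from $C'$ by base change to $F$ followed by a suitable quadratic twist, so $B \times_\Q F$ is isogenous to a twist of $A_f$ over $F$. Since $A_f$ has everywhere good reduction and the only rational prime ramifying in $F/\Q$ is $193$, the twist character is controlled and supported only above $193$. Next I would exploit the real multiplication on $A_f$: the Tate module $T_\ell(A_f)$ is free of rank $2$ over $\CO_f \otimes \Z_\ell \cong \Z_\ell \times \Z_\ell$, producing a $2$-dimensional representation $\rho_{f,\lambda}\colon G_F \to \GL_2(\Q_\ell)$ attached to $f$ by the work of Taylor, Blasius--Rogawski and others, with
\[
\rho_{A_f,\ell}|_{G_F} \cong \rho_{f,\lambda} \oplus \rho_{f,\lambda^\tau}.
\]
Since $B$ is defined over $\Q$ with $\End_\Q(B) = \Z$ (as predicted in Case~II) but acquires endomorphisms by $\CO_f$ upon base change to $F$, any lift of $\sigma \in \Gal(F/\Q)$ to $G_\Q$ must act on $\CO_f$ by $\tau$, interchanging $\lambda$ and $\lambda^\tau$ and hence permuting the two summands. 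Combined with the twist analysis above, this forces
\[
\rho_{B,\ell} \cong \mathrm{Ind}_{G_F}^{G_\Q}\,\rho_{f,\lambda}
\]
as $4$-dimensional $\Q_\ell$-representations of $G_\Q$.

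From the induction isomorphism I obtain $L(B,s) = L(f,s)$ as a degree-$4$ $L$-function over $\Q$ (with integer coefficients, by the Case~II hypothesis), and the Artin conductor formula for induced representations gives $\mathrm{cond}(B) = |\mathrm{disc}(F/\Q)|^{\,2} \cdot N_{F/\Q}(\mathrm{cond}(\rho_{f,\lambda})) = 193^2$. Then, exactly as in the proof of Proposition~\ref{prop:bk-implies-es}, \cite[Main Theorem]{johnson-leung-roberts12} applied to the non-base-change Hilbert newform $f$ of weight $2$ and level $(1)$ over $F = \Q(\sqrt{193})$ yields a paramodular Siegel newform $g$ of weight $2$ and level $193^2$, with integer Hecke eigenvalues, such that $L(g,s) = L(f,s) = L(B,s)$. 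This is precisely the assertion that $B$ is paramodular of level $193^2$.

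The main obstacle is rigorously establishing the induction isomorphism $\rho_{B,\ell} \cong \mathrm{Ind}_{G_F}^{G_\Q}\,\rho_{f,\lambda}$. One must (a) control the quadratic twist relating $C'_F$ and $C$ carefully enough to verify it does not descend to a nontrivial twist of $B$ over $\Q$, and (b) confirm that the $\Gal(F/\Q)$-action on the RM structure on $B \times_\Q F$ is genuinely by $\tau$ — equivalently, that $B$ lies in Case~II rather than Case~III, and is not isogenous to a product of elliptic curves. Once these structural points are pinned down, the remainder of the argument is a clean assembly of the induction formula, the Johnson--Leung--Roberts transfer, and the Artin conductor computation.
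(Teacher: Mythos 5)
This is essentially the paper's own argument: the corollary carries no separate proof in the text, being the immediate combination of the modularity of $J(C)$ established in Theorem~\ref{thm:ex-sqrt193} with the Johnson--Leung--Roberts transfer already invoked in Proposition~\ref{prop:bk-implies-es}, which is exactly the chain you assemble via $\rho_{B,\ell}\cong\mathrm{Ind}_{G_F}^{G_\Q}\rho_{f,\lambda}$ and the conductor formula for induced representations. The twist issue you rightly flag is likewise left implicit in the paper; it is harmless because a quadratic twist by the restriction to $G_F$ of the character cutting out $F/\Q$ is absorbed by induction, so one only needs to verify that the twist relating $C$ and $C'\times_\Q F$ is of this form --- equivalently, that $B$ has good reduction away from $193$, which can be checked directly.
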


\begin{rem} 
In~\cite{brumer-kramer12}, the authors remarked that Conjecture 1.4 in
their paper should be verifiable by current technology for paramodular
abelian surfaces $B$ over $\Q$ with $\End_{\Qbar}(B)\supsetneq
\Z$. The majority of the surfaces we found fall in Case II (see
Sect.~\ref{sec:data}), and provide such evidence by
Corollary~\ref{cor:bk-evidence}.
\end{rem}

In contrast to the curves in Theorems~\ref{thm:ex-sqrt53}
and~\ref{thm:ex-sqrt193}, we found a few curves whose Jacobians had
everywhere good reduction while the curves themselves did not. We now
discuss one such example, for the field $F = \Q(\sqrt{929})$, with
Hecke eigenvalues in $\Q(\sqrt{13})$.

\begin{table}[h]
\caption{The first few Hecke eigenvalues of a non-base change
  newform of level $(1)$ and weight $2$ over $\Q(\sqrt{929})$. 
  Here $e = (1 + \sqrt{13})/2$.}
\label{table:eigenform-sqrt929}
\begin{tabular}{ >{$}c<{$}   >{$}r<{$}   >{$}r<{$}   >{$}r<{$}  >{$}r<{$} }
\toprule
\mathrm{N}\gp&\gp&a_{\gp}(f)& s_{\gp}(f) & t_{\gp}(f) \\
\midrule
2 & 561w - 8830 & -e + 1 & 1 & 1  \\ 
2 & 561w + 8269 & e&  1 & 1 \\ 
5 & -4w - 59 & -e + 1&  1& 7 \\ 
5 & 4w - 63 & e&  1 & 7 \\ 
9 & 3 & 3& 6&  27 \\ 
11 & -8342w + 131301 & 2e - 3& -4 & 13  \\ 
11 & 8342w + 122959 & -2e - 1& -4 & 13  \\ 
19 & -50w - 737 & e - 2& -3 & 37 \\ 
19 & 50w - 787 & -e - 1&  -3 & 37 \\ 
23 & -42832w + 674165 & 4e - 4& -4 & - 2 \\ 
23 & 42832w + 631333 & -4e&  -4 & - 2 \\ 
29 & -2w + 31 & -2e + 6& 10 & 70 \\ 
29 & 2w + 29 & 2e + 4& 10 & 70 \\ 
\bottomrule
\end{tabular}
\end{table}

\begin{thm}\label{thm:ex-sqrt929} 
Let $C: y^2+Q(x)y=P(x)$ be the curve over $F$, where
\begin{align*}
P(x)&:=23x^6 + (90w - 45)x^5 + 33601x^4 + (28707w -
14354)x^3\\ &\quad\qquad{}\,\,\, + 3192149x^2 + (811953w - 405977)x +
19904990,\\ Q(x)&:=x^3 + x + 1.
\end{align*}
Then
\begin{enumerate}
\item[(a)] The discriminant $\Delta_C=3^{22}$, hence $C$ has bad
  reduction at $(3)$.
\item[(b)] The surface $A:=J(C)$ has everywhere good reduction. It is
  modular and corresponds to the form $f$ listed in
  Table~\ref{table:eigenform-sqrt929}.
\end{enumerate}
\end{thm}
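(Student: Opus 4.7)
The plan is as follows. For part (a), a direct calculation of the discriminant of the sextic $4P(x) + Q(x)^2$ associated to the Weierstrass equation $y^2 + Q(x)y = P(x)$ gives $\Delta_C = 3^{22}$, so $C$ has bad reduction precisely at the unique prime $(3)$ of $\CO_F$ above $3$. The new feature compared with Theorems~\ref{thm:ex-sqrt53} and~\ref{thm:ex-sqrt193} is that this Weierstrass model is not a good model of its Jacobian at $(3)$.

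For the good-reduction assertion in part (b), the plan is to show that $A := J(C)$ nevertheless has good reduction at $(3)$. One route is to compute the minimal proper regular model of $C$ over the completion at $(3)$ using Liu's algorithm and then extract the conductor of $J(C)$ via the Ogg--Saito formula; vanishing of the conductor exponent then implies good reduction. Alternatively, I would try to exhibit an explicit genus-$2$ curve $\widetilde{C}/F$ (or an isogenous abelian surface with a visible good model) whose Jacobian is $F$-isomorphic to $A$ and whose defining equation has unit discriminant at $(3)$. Such a $\widetilde{C}$ can be sought by running through $F$-quadratic twists and Tschirnhausen transformations preserving the Igusa--Clebsch invariants, i.e., preserving the point of the Hilbert modular surface $Y_{-}(13)$ underlying $A$.

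Once good reduction of $A$ is established, the modularity argument follows the strategy of the proof of Theorem~\ref{thm:ex-sqrt193}. The surface $A$ has real multiplication by $\CO_f = \Z[(1+\sqrt{13})/2]$, so for a judiciously chosen odd rational prime $\ell$ and a prime $\lambda$ of $\CO_f$ above it, I would analyse the $\lambda$-adic representation
\[
\rho_{A,\lambda} : \Gal(\Qbar/F) \to \GL_2(K_{f,\lambda}).
\]
Computing Frobenius characteristic polynomials at several small primes of $F$ shows that $\overbar{\rho}_{A,\lambda}$ has large image, in particular that it is absolutely irreducible and that its restriction to $\Gal(\Qbar/F(\zeta_\ell))$ remains absolutely irreducible. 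With the standard control on inertia at primes above and away from $\ell$ (using that $\ell$ is unramified in $F$ and that $A$ has good reduction everywhere), \cite[Theorem 3.2 and Proposition 3.4]{ellenberg05} yields residual modularity, and \cite[Theorem 1.1 in Erratum]{gee06, gee09} lifts this to modularity of $\rho_{A,\lambda}$. Comparing Hecke eigenvalues at a few split primes then identifies the associated newform with the $f$ displayed in Table~\ref{table:eigenform-sqrt929}.

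The main obstacle is the good-reduction step at $(3)$: unlike in the two previous theorems, the Weierstrass model itself is not good at this prime, so one must either push through a Liu-style analysis of the regular model of $C$ or find an explicit alternative model witnessing good reduction of $A$. The latter requires a guided search inside the twist class of $A$, which is heuristic rather than algorithmic. Once this step is cleared, the modularity portion proceeds in parallel to Theorem~\ref{thm:ex-sqrt193}.
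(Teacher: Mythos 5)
Your outline for part (a) and for the modularity-lifting skeleton matches the paper, and your first route to good reduction at $(3)$ --- running Liu's algorithm on a regular model and reading off the reduction type --- is exactly what the paper does: it observes that $C$ is a global minimal model of the base change of an explicit curve $C'/\Q$, applies Liu's algorithm at $3$ to get the type $\mathrm{(V)},\,[\mathrm{I}_0-\mathrm{I}_0-1]$, and concludes that the special fibre of the N\'eron model of $J(C')$ is a product of two elliptic curves (with $j_1=j_2=0$), hence that $A$ has (non-ordinary) good reduction at the inert prime $(3)$. Your alternative route (ii), however, cannot succeed and should be dropped: precisely because the reduction of $A$ at $(3)$ is a product of two elliptic curves with the product polarization, it lies outside the open Torelli locus, so \emph{no} smooth genus-$2$ curve over $F$ with good reduction at $(3)$ can have Jacobian isomorphic to $A$; searching through twists and Tschirnhausen transformations for a unit-discriminant model is futile here, and this is exactly the phenomenon the paper is illustrating with this example.

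There is also a concrete misstep in the residual modularity input. You propose to reuse \cite[Theorem 3.2 and Proposition 3.4]{ellenberg05} as in Theorem~\ref{thm:ex-sqrt193}, but Ellenberg's theorem is Serre's conjecture over $\F_9$, i.e.\ it applies to residual representations valued in $\GL_2(\F_9)$. In the present example $K_f=\Q(\sqrt{13})$ and $3$ \emph{splits} in $\CO_f$, so for $\lambda\mid 3$ the representation $\overbar{\rho}_{A,\lambda}$ lands in $\GL_2(\F_3)$, where Ellenberg does not apply; no other choice of $\ell$ rescues this, since the theorem is specific to $\F_9$. The paper instead exploits the solvability of $\GL_2(\F_3)$ and deduces residual modularity from Langlands--Tunnell, then applies Gee's lifting theorem (after checking, via the Tate module of $A\times_F F(\zeta_3)$, that $\rho_{A,\lambda}$ is not induced from $F(\zeta_3)$). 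Your "judiciously chosen prime" hedge points in the right direction, but the correct choice here forces $\F_3$ coefficients and hence a Langlands--Tunnell argument rather than an Ellenberg one.
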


\begin{proof} 
The curve $C$ is a global minimal model for the base change to $F$ of
the curve $C'/\Q$ given by
\begin{align*}
C': y^2 &= 93x^6 - 14688x^5 + 549594x^4 + 2268918x^3 + 2259369x^2 - 1488402x\\
&{}\qquad + 13059345.
\end{align*}
We compute the reduction $\widetilde{C'}$ of $C'$ at $3$ by
combining~\cite[Theorem 1 and Proposition 2]{liu93}, and Liu's
algorithm implemented in \texttt{Sage}. This returns the type
$\mathrm{(V),\,[I_{0}-I_{0}-1]}$.  So the reduction $\widetilde{A'}$
of the Jacobian $A'$ of $C'$ is a product of two elliptic curves whose
$j$-invariants are $j_1=j_2=0$ (\cite[Proposition 2, (v)]{liu93}).
This implies that $A'$ has non-ordinary good reduction at $(3)$; and
so does $A$ since $3$ is inert in $F$. (Note that this is consistent
with the fact that $a_{(3)}(f)=3$.)  Since $3$ is the only prime
dividing $\Delta_C$, we see that $A$ has everywhere good reduction.

To prove modularity, we recall that by construction $A$ has real
multiplication by $\CO_f=\Z[\frac{1+\sqrt{13}}{2}]$, where $3$
splits. We choose a prime $\lambda$ above $3$, and consider the
$\lambda$-adic representation 
\[
\rho_{A, \lambda}:\,\Gal(\Qbar/F)\to
\GL_2(K_{f,\lambda})\simeq\GL_2(\Q_3)
\]
and its reduction $\overbar{\rho}_{A,\lambda}$ modulo $\lambda$. By
computing the first few Frobenii, we see that $\overbar{\rho}_{A,
  \lambda}$ is surjective, hence irreducible. Since $\GL_2(\F_3)$ is
solvable, $\overbar{\rho}$ is modular by
Langlands--Tunnell~\cite[Chap. I]{langlands80} and~\cite{tunnell81}.
By looking at the Tate module of $A\times_F F(\zeta_3)$, we also see
that $\rho_{A,\lambda}$ is not induced from $F(\zeta_3)$. So, we
conclude that $\rho_{A, \lambda}$ is modular by \cite[Theorem 1.1 in
  Erratum]{gee06, gee09}.
\end{proof}

\begin{rem}
The example in Theorem~\ref{thm:ex-sqrt929} and other similar ones in
Table~\ref{table:case-II} underscore the difficulty in producing
effective algorithms for principally polarized abelian surfaces with
good reduction outside a (finite) prescribed set of primes $S$ of
$\CO_F$.  Indeed, let $A$ be such a surface so that $A =
\mathrm{Jac}(C)$, where $C$ is a curve defined over $F$ with good
reduction outside a finite set of primes $T\supseteq S$. Then, the set
$T \setminus S$ is non-empty in general, depends {\it a priori} on
$A$, and is hard to predict. When $A$ has real multiplication by some
quadratic field $K$ and is attached to a modular form $f$, $T\setminus
S$ is contained in the set of non-ordinary primes for $f$, which is
possibly infinite.
\end{rem}

Similar proofs apply for the other Hilbert modular forms in Case II
for which we were able to find matching principally polarized abelian
surfaces. However, there are five examples (listed in Table
\ref{table:unresolved}) for which we were unable as yet to find
matching abelian surfaces. In each case, the Fourier coefficients of
the form indicate that the missing surface would have real
multiplication by the full ring of integers $\CO_{D'}$. So, assuming
the Eichler--Shimura conjecture holds, our difficulties in matching
those forms could be due to one of the following reasons:
\begin{enumerate}
\item[(a)] Our height bound for the rational point search on the
  corresponding Hilbert modular surfaces is too small. We searched for
  parameters $r,s \in \Q$ of height up to $1000$.
\item[(b)] The corresponding abelian surface is not principally
  polarized. Note that the criteria given in
  \cite[Proposition~3.11]{gonzalez-guardia-rotger05} to convert an
  arbitrary polarization to a principal polarization fail for each of
  the missing discriminants $D'$. For $(D,D') = (677,85)$, the field
  $\Q(\sqrt{D'})$ has class number $2$, whereas for the other
  examples, there is no unit of negative norm.
\end{enumerate}
There is also the possibility, since the models in
\cite{elkies-kumar12} are birational to $Y_{-}(D')$ (rather than
isomorphic), that we might have missed some curves or points in our
search. However, this is unlikely to be the case, as the extra points
should correspond to abelian surfaces with extra endomorphisms.

\subsection{Case III} 

This is by far the trickiest case, since the Igusa--Clebsch invariants
(and therefore $r,s$) are not in $\Q$. This leads to a much slower
search for $F$-points on $Y_{-}(D')$, compared to searching for
$\Q$-points. We searched for points of height up to $400$ using the
enumeration of points of small height developed in
\cite{doyle-krumm13} (implemented in \texttt{Sage}), but were unable
to find either of the two examples predicted by the Eichler--Shimura
conjecture, corresponding to the Hilbert modular forms of level $1$
and weight $2$ over $\Q(\sqrt{353})$ and $\Q(\sqrt{421})$, both with
Fourier coefficients in $\Q(\sqrt{5})$. In addition to the reduced
search height bound, another complicating factor is the fundamental
unit of $F$, which might be quite large. In Case II, the discriminant
of the genus $2$ curve differed from $I_{10}(r,s)$ by only a few small
(rational) primes. However, in Case III, one has to take into account
the fact that a power of the fundamental unit might also appear in the
discriminant. On the other hand, principal polarizability is not an
obstruction, as $\Q(\sqrt{5})$ has a fundamental unit of negative
norm.

We hope to address the missing examples using different techniques in
future work.

\begin{table}[h] 
\caption{Unresolved Cases}
\label{table:unresolved}
\begin{tabular}{cc}
\toprule
Case & List of $(D,D')$ \\
\midrule
II & $(433,12), (613,21), (677,85), (821,44), (853,21)$ \\
III & $ (353,5), (421,5)$ \\
\bottomrule
\end{tabular}
\end{table}

\section{\bf \large The data}\label{sec:data}

In tables~\ref{table:case-I} and \ref{table:case-II} we list genus $2$
curves $y^2 + Q(x) y = P(x)$ matching the data. We always set $b =
\sqrt{D}$ and $w = (b+1)/2$. We suppress $Q(x)$ when it is $0$. (We
recall that each of the curves listed has a modular Jacobian. In Case
I, this is true as the Jacobian is a $\Q$-surface. While in Case II,
we prove the modularity by the same technique as above.)

\clearpage
\begin{small}
\begin{table}[t!]
\caption{Case I examples}
\label{table:case-I}
\begin{tabular}{ >{$}c<{$}   >{$}c<{$}   >{$}l<{$}   }
D & D' & \hspace{20mm} \textrm{Hyperelliptic polynomials} \\
\midrule
 & & Q = wx^3 + wx^2 + w + 1 \\
53 & 8 & P = -4x^6 + (w - 17)x^5 + (12w - 27)x^4 + (5w - 122)x^3  \\ 
& & \qquad \quad + (45w - 25)x^2 + (-9w - 137)x + 14w + 9  \\ 
\cmidrule{3-3}
& & Q = x^3 + x + 1 \\
73 & 5 & P=  (w - 5)x^6 + (3w - 14)x^5 + (3w - 19)x^4  \\
& & \qquad \quad + (4w - 3)x^3 -(3w + 16)x^2 + (3w + 11)x - (w + 4) \\
\cmidrule{3-3}
& & Q =  w(x^3 + 1)  \\
421 & 5 & P=  -2(4414w + 43089)x^6 + (31147w+303963)x^5 \\
& & \qquad \quad -10(4522w+44133)x^4 + 2(17290w+168687)x^3 \\
& & \qquad \quad -18(816w+7967)x^2 + 27(122w+1189)x -(304w+3003) \\
\bottomrule
\end{tabular}
\vspace{6mm}
\end{table}
\end{small}

\begin{small}
\begin{table}[b!]
\caption{Case II examples}
\label{table:case-II}
\begin{tabular}{ >{$}c<{$}   >{$}c<{$}   >{$}l<{$}   }
D & D' & \hspace{20mm} \textrm{Hyperelliptic polynomials} \\
\midrule
& & Q =  -x-w \\
193 & 17 & P=  2x^6 + (-2w + 7)x^5 + (-5w + 47)x^4 + (-12w +85)x^3 \\
& & \qquad \quad + (-13w + 97)x^2 + (-8w + 56)x - 2w + 1 \\
\cmidrule{3-3}
233 & 17 & Q = x+1 \\
& & P=  -2x^6 - (2w - 1)x^5 - 45x^4 - 4(2w -1)x^3- 31x^2 + (w - 1)x + 9\\
\cmidrule{3-3}
277 & 29 & Q =  -1 \\
&  & P=  -24 x^6 + 31 b x^5 - 4615 x^4 + 1321 b x^3 +58837 x^2 + 5039 b x    - 49745\\
\cmidrule{3-3}
349 & 21 & Q = x^3 \\
 &  & P=  -2x^6 + 4bx^5 - 1328x^4 + 673bx^3 - 66879x^2 + 10145bx - 223536 \\
\cmidrule{3-3}
373 & 93 & P= (265b - 5118)(8x-b)(8960bx^5 - 2020471x^4 + 488608bx^3 \\
& & \qquad \quad  - 22037369x^2 + 1332394bx - 12019522)  \\
\cmidrule{3-3}
389 & 8 & Q =  x^3 + x^2 + x + 1 \\
 &  & P=  -wx^5 + 159x^4 - (138w - 68)x^3 + 6429x^2 - (1619w - 809)x + 16260 \\
\cmidrule{3-3}
397 & 24 & P=  -601 x^6 + 748 b x^5 - 154001 x^4 + 42596 b x^3 - 2631127 x^2 \\
& & \qquad \quad + 218342 b x - 2997270 \\
\cmidrule{3-3}
& & Q = x^3 + x^2 + 1 \\
409 & 13 & P=  -2x^6 + (-3w + 1)x^5 - 219x^4 + (-83w + 41)x^3 - 1806x^2 \\
& & \qquad \quad + (-204w + 102)x - 977 \\
\cmidrule{3-3}
461 & 29 & Q = x^3 \\
 & & P=  -32x^6 -34bx^5 - 6916x^4 -1605bx^3 - 94873x^2 - 6335bx - 78584  \\
\cmidrule{3-3}
& & Q = x^3 + x^2 + x + 1 \\
677 & 13 & P=  -12x^6 + (61w - 31)x^5 - 22335x^4 + (25770w - 12886)x^3 \\
 & & \qquad \quad  -2830998x^2 + (980087w - 490044)x - 23929668 \\
\bottomrule
\end{tabular}
\end{table}
\end{small}

\clearpage
\addtocounter{table}{-1}
\begin{small}
\begin{table*}[h]
\caption{continued}
\begin{tabular}{ >{$}c<{$}   >{$}c<{$}   >{$}l<{$}   }
D & D' & \hspace{20mm} \textrm{Hyperelliptic polynomials} \\
\midrule
677 & 29 & P= -4453 x^6 - 5786 b x^5 - 2120768 x^4 - 612392 b x^3- 67342400 x^2 \\
& & \qquad \quad  - 5834038 b x - 142573513 \\
\cmidrule{3-3}
709 & 5 & P= 2x^6 + 2 b x^5 + 560 x^4 + 114 b x^3 + 9040 x^2 + 530 b x + 9058  \\
\cmidrule{3-3}
797 & 8 & P= 1856x^6 -3784bx^5 + 2561907x^4 -1160668bx^3+ 235735797x^2  \\
& & \qquad \quad -32038746bx + 1445987770 \\
\cmidrule{3-3}
& & Q =  x^3 + x^2 + x + 1  \\
797 & 29 & P= x^6 + (3w - 2)x^5 + 594x^4 + (314w - 158)x^3 + 18483x^2   \\
& & \qquad \quad + (2897w - 1449)x + 37491\\
\cmidrule{3-3}
& & Q = x^3 + x + 1 \\
809 & 5 & P= -134 x^6 - (146 w - 73) x^5 - 13427 x^4 - (3255w - 1627) x^3  \\
& & \qquad \quad -89746 x^2 -(6523w - 3261) x - 39941  \\
\cmidrule{3-3}
& & Q = x^3 + x + 1 \\
929 & 13 & P = 23x^6 + (90w - 45)x^5 + 33601x^4 + (28707w -14354)x^3 \\
& & \qquad \quad + 3192149x^2 + (811953w - 405977)x + 19904990 \\
\cmidrule{3-3}
997 & 13 & Q = x^3 \\
 &  & P = x^6 + 3bx^5 + 2989x^4 + 1592bx^3 + 475212x^2 + 75831bx + 5023486 \\
\bottomrule
\end{tabular}
\end{table*}
\end{small}

\section{\bf \large Appendix}

In Table~\ref{table:hmfdata} below we list Hilbert modular form data
for all the examples considered in this paper.

\begin{table}[h!] 
\caption{Hecke eigenvalues for the Hilbert modular forms in this
  paper}
\label{table:hmfdata}
\begin{center}
\scriptsize
\begin{tabular}{@{}crrccrrccrr@{}} \toprule
\multicolumn{3}{c}{$D=53,\,\,D'=8$}&\phantom{abc}&\multicolumn{3}{c}{$D=61,\,\,D'=12$}&\phantom{abc}&\multicolumn{3}{c}{$D=73,\,\,D'=5$}\\\cmidrule{1-3}\cmidrule{5-7}\cmidrule{9-11}
$\mathbf{N}\mathfrak{p}$&$\mathfrak{p}$&$a_{\mathfrak{p}}(f)$&&$\mathbf{N}\mathfrak{p}$&$\mathfrak{p}$&$a_{\mathfrak{p}}(f)$&&$\mathbf{N}\mathfrak{p}$&$\mathfrak{p}$&$a_{\mathfrak{p}}(f)$\\\midrule
4&$2$&$e + 1$&&3&$-w - 3$&$e - 1$&&2&$-w - 4$&$-e$\\
7&$-w - 2$&$-e - 2$&&3&$-w + 4$&$e - 1$&&2&$w - 5$&$-e$\\
7&$-w + 3$&$-e - 2$&&4&$2$&$e$&&3&$-4w - 15$&$-e + 1$\\
9&$3$&$-3e + 1$&&5&$w - 5$&$-e$&&3&$-4w + 19$&$-e + 1$\\
11&$w - 2$&$3e$&&5&$-w - 4$&$-e$&&19&$6w - 29$&$4e - 1$\\
11&$w + 1$&$3e$&&13&$-w - 1$&$3$&&19&$-6w - 23$&$4e - 1$\\
13&$w - 1$&$-2e + 1$&&13&$w - 2$&$3$&&23&$14w - 67$&$-3e + 4$\\
13&$-w$&$-2e + 1$&&19&$-3w - 11$&$-e + 3$&&23&$-14w - 53$&$-3e + 4$\\
17&$-w - 5$&$-3$&&19&$3w - 14$&$-e + 3$&&25&$5$&$-e + 1$\\
17&$w - 6$&$-3$&&41&$w - 8$&$-2e - 3$&&37&$-2w - 5$&$5$\\
25&$5$&$2e + 4$&&41&$-w - 7$&$-2e - 3$&&37&$2w - 7$&$5$\\
29&$-w - 6$&$3e - 3$&&47&$-3w - 8$&$4e + 6$&&41&$30w - 143$&$2e + 4$\\\bottomrule
\end{tabular}
\end{center}
\end{table}

\addtocounter{table}{-1}

\begin{table*}[t!] 
\caption{continued}
\begin{center}
\scriptsize
\begin{tabular}{@{}crrccrrccrr@{}} \toprule
\multicolumn{3}{c}{$D=193,\,\,D'=17$}&\phantom{abc}&\multicolumn{3}{c}{$D=233,\,\,D'=17$}&\phantom{abc}&\multicolumn{3}{c}{$D=277,\,\,D'=29$}\\\cmidrule{1-3}\cmidrule{5-7}\cmidrule{9-11}
$\mathbf{N}\mathfrak{p}$&$\mathfrak{p}$&$a_{\mathfrak{p}}(f)$&&$\mathbf{N}\mathfrak{p}$&$\mathfrak{p}$&$a_{\mathfrak{p}}(f)$&&$\mathbf{N}\mathfrak{p}$&$\mathfrak{p}$&$a_{\mathfrak{p}}(f)$\\\midrule
2&$9w + 58$&$-e + 1$&&2&$-w - 7$&$e$&&3&$w + 8$&$-e + 1$\\
2&$9w - 67$&$e$&&2&$-w + 8$&$-e + 1$&&3&$-w + 9$&$e$\\
3&$-2w + 15$&$e$&&7&$-8w + 65$&$e - 1$&&4&$2$&$-2$\\
3&$2w + 13$&$-e + 1$&&7&$8w + 57$&$-e$&&7&$6w - 53$&$-e + 3$\\
7&$186w - 1385$&$e + 1$&&9&$3$&$-2$&&7&$-6w - 47$&$e + 2$\\
7&$-186w - 1199$&$-e + 2$&&13&$38w - 309$&$-e + 3$&&13&$-w - 7$&$-e - 1$\\
23&$-38w + 283$&$-e - 6$&&13&$-38w - 271$&$e + 2$&&13&$w - 8$&$e - 2$\\
23&$-38w - 245$&$e - 7$&&19&$-6w + 49$&$-3e + 3$&&19&$4w + 31$&$-2e + 1$\\
25&$5$&$1$&&19&$6w + 43$&$3e$&&19&$-4w + 35$&$2e - 1$\\
31&$-16w + 119$&$-e - 2$&&23&$2w + 15$&$-e + 2$&&23&$-3w + 26$&$3$\\
31&$-16w - 103$&$e - 3$&&23&$-2w + 17$&$e + 1$&&23&$-3w - 23$&$3$\\
43&$4w + 25$&$e + 4$&&25&$5$&$-3$&&25&$5$&$-3$\\\bottomrule
\multicolumn{11}{c}{}\\
\multicolumn{11}{c}{}\\
\toprule
\multicolumn{3}{c}{$D=349,\,\,D'=21$}&\phantom{abc}&\multicolumn{3}{c}{$D=353,\,\,D'=5$}&\phantom{abc}&\multicolumn{3}{c}{$D=353,\,\,D'=5$}\\\cmidrule{1-3}\cmidrule{5-7}\cmidrule{9-11}
$\mathbf{N}\mathfrak{p}$&$\mathfrak{p}$&$a_{\mathfrak{p}}(f)$&&$\mathbf{N}\mathfrak{p}$&$\mathfrak{p}$&$a_{\mathfrak{p}}(f)$&&$\mathbf{N}\mathfrak{p}$&$\mathfrak{p}$&$a_{\mathfrak{p}}(f)$\\\midrule
3&$-w - 9$&$-e + 1$&&2&$w + 9$&$2e - 1$&&2&$w + 9$&$-e + 1$\\
3&$w - 10$&$e$&&2&$-w + 10$&$-e + 1$&&2&$-w + 10$&$2e - 1$\\
4&$2$&$-2$&&9&$3$&$-2e - 2$&&9&$3$&$-2e - 2$\\
5&$-6w + 59$&$e$&&11&$10w + 89$&$-2e + 2$&&11&$10w + 89$&$2e + 3$\\
5&$-6w - 53$&$-e + 1$&&11&$-10w + 99$&$2e + 3$&&11&$-10w + 99$&$-2e + 2$\\
17&$13w - 128$&$-e + 2$&&17&$66w - 653$&$-4e + 2$&&17&$66w - 653$&$3$\\
17&$13w + 115$&$e + 1$&&17&$-66w - 587$&$3$&&17&$-66w - 587$&$-4e + 2$\\
19&$-5w - 44$&$2e$&&19&$-28w + 277$&$2$&&19&$-28w + 277$&$2e - 3$\\
19&$5w - 49$&$-2e + 2$&&19&$28w + 249$&$2e - 3$&&19&$28w + 249$&$2$\\
23&$-w + 11$&$-2e + 5$&&23&$-8w - 71$&$4e - 2$&&23&$-8w - 71$&$2e + 3$\\
23&$w + 10$&$2e + 3$&&23&$8w - 79$&$2e + 3$&&23&$8w - 79$&$4e - 2$\\
29&$-3w - 26$&$-2e - 1$&&25&$5$&$-3$&&25&$5$&$-3$\\\bottomrule
\multicolumn{11}{c}{}\\
\multicolumn{11}{c}{}\\
\toprule
\multicolumn{3}{c}{$D=373,\,\,D'=93$}&\phantom{abc}&\multicolumn{3}{c}{$D=389,\,\,D'=8$}&\phantom{abc}&\multicolumn{3}{c}{$D=397,\,\,D'=24$}\\\cmidrule{1-3}\cmidrule{5-7}\cmidrule{9-11}
$\mathbf{N}\mathfrak{p}$&$\mathfrak{p}$&$a_{\mathfrak{p}}(f)$&&$\mathbf{N}\mathfrak{p}$&$\mathfrak{p}$&$a_{\mathfrak{p}}(f)$&&$\mathbf{N}\mathfrak{p}$&$\mathfrak{p}$&$a_{\mathfrak{p}}(f)$\\\midrule
3&$w - 10$&$-2$&&4&$2$&$-2$&&3&$2w + 19$&$-e$\\
3&$w + 9$&$-2$&&5&$-3w - 28$&$2e - 1$&&3&$-2w + 21$&$e$\\
4&$2$&$3$&&5&$-3w + 31$&$-2e - 1$&&4&$2$&$-1$\\
7&$-6w - 55$&$-2$&&7&$-w - 9$&$-2e - 1$&&11&$w - 11$&$-e + 2$\\
7&$6w - 61$&$-2$&&7&$w - 10$&$2e - 1$&&11&$-w - 10$&$e + 2$\\
13&$-7w + 71$&$e + 1$&&9&$3$&$-4$&&19&$-11w + 115$&$2e - 2$\\
13&$-7w - 64$&$e + 1$&&11&$2w + 19$&$-2e - 2$&&19&$-11w - 104$&$-2e - 2$\\
17&$-w - 10$&$e - 2$&&11&$-2w + 21$&$2e - 2$&&23&$-3w - 28$&$2$\\
17&$-w + 11$&$e - 2$&&13&$-w - 10$&$2e + 1$&&23&$3w - 31$&$2$\\
25&$5$&$6$&&13&$w - 11$&$-2e + 1$&&25&$5$&$-4$\\
29&$-4w + 41$&$-e - 1$&&17&$-8w - 75$&$2e - 4$&&29&$9w - 94$&$1$\\
29&$-4w - 37$&$-e - 1$&&17&$-8w + 83$&$-2e - 4$&&29&$-9w - 85$&$1$\\\bottomrule
\end{tabular}
\end{center}
\end{table*}

\addtocounter{table}{-1}

\begin{table*}[t!]
\caption{continued}
\begin{center}
\scriptsize
\begin{tabular}{@{}crrccrrccrr@{}} \toprule
\multicolumn{3}{c}{$D=409,\,\,D'=13$}&\phantom{abc}&\multicolumn{3}{c}{$D=421,\,\,D'=5$}&\phantom{abc}&\multicolumn{3}{c}{$D=421,\,\,D'=5$}\\\cmidrule{1-3}\cmidrule{5-7}\cmidrule{9-11}
$\mathbf{N}\mathfrak{p}$&$\mathfrak{p}$&$a_{\mathfrak{p}}(f)$&&$\mathbf{N}\mathfrak{p}$&$\mathfrak{p}$&$a_{\mathfrak{p}}(f)$&&$\mathbf{N}\mathfrak{p}$&$\mathfrak{p}$&$a_{\mathfrak{p}}(f)$\\\midrule
2&$219w + 2105$&$e - 1$&&3&$4w - 43$&$2e$&&3&$4w - 43$&$-2e + 1$\\
2&$219w - 2324$&$-e$&&3&$4w + 39$&$-2e + 1$&&3&$4w + 39$&$2e$\\
3&$-11066w - 106365$&$-e + 2$&&4&$2$&$3$&&4&$2$&$e - 2$\\
3&$11066w - 117431$&$e + 1$&&5&$-w - 10$&$e - 2$&&5&$-w - 10$&$3$\\
5&$-18w + 191$&$-e$&&5&$w - 11$&$3$&&5&$w - 11$&$e - 2$\\
5&$-18w - 173$&$e - 1$&&7&$54w + 527$&$e - 2$&&7&$54w + 527$&$3$\\
17&$8w + 77$&$4$&&7&$-54w + 581$&$-e + 5$&&7&$-54w + 581$&$e - 2$\\
17&$8w - 85$&$4$&&11&$25w - 269$&$e - 2$&&11&$25w - 269$&$-e + 5$\\
23&$286w - 3035$&$-4e + 3$&&11&$-25w - 244$&$4$&&11&$-25w - 244$&$0$\\
23&$286w + 2749$&$4e - 1$&&17&$-3w + 32$&$0$&&17&$-3w + 32$&$4$\\
41&$-1600w + 16979$&$-e + 5$&&17&$-3w - 29$&$-6e + 3$&&17&$-3w - 29$&$-4e + 5$\\
41&$-1600w - 15379$&$e + 4$&&31&$9w - 97$&$-4e + 5$&&31&$9w - 97$&$-6e + 3$\\\bottomrule
\multicolumn{11}{c}{}\\
\multicolumn{11}{c}{}\\
\toprule
\multicolumn{3}{c}{$D=421,\,\,D'=5$}&\phantom{abc}&\multicolumn{3}{c}{$D=433,\,\,D'=12$}&\phantom{abc}&\multicolumn{3}{c}{$D=461,\,\,D'=29$}\\\cmidrule{1-3}\cmidrule{5-7}\cmidrule{9-11}
$\mathbf{N}\mathfrak{p}$&$\mathfrak{p}$&$a_{\mathfrak{p}}(f)$&&$\mathbf{N}\mathfrak{p}$&$\mathfrak{p}$&$a_{\mathfrak{p}}(f)$&&$\mathbf{N}\mathfrak{p}$&$\mathfrak{p}$&$a_{\mathfrak{p}}(f)$\\\midrule
3&$4w - 43$&$2$&&2&$-w + 11$&$-e$&&4&$2$&$-2$\\
3&$4w + 39$&$2$&&2&$w + 10$&$e$&&5&$-w - 10$&$e$\\
4&$2$&$2e - 1$&&3&$1202w - 13107$&$e - 1$&&5&$-w + 11$&$-e + 1$\\
5&$-w - 10$&$e + 2$&&3&$-1202w - 11905$&$-e - 1$&&9&$3$&$-3$\\
5&$w - 11$&$e + 2$&&11&$-324w - 3209$&$-e - 3$&&17&$w + 11$&$-e + 4$\\
7&$54w + 527$&$-2e + 2$&&11&$-324w + 3533$&$e - 3$&&17&$-w + 12$&$e + 3$\\
7&$-54w + 581$&$-2e + 2$&&13&$94w + 931$&$-3$&&19&$3w - 34$&$-e + 3$\\
11&$25w - 269$&$-4$&&13&$-94w + 1025$&$-3$&&19&$-3w - 31$&$e + 2$\\
11&$-25w - 244$&$-4$&&17&$17152w - 187031$&$-2e - 3$&&23&$-2w + 23$&$-e + 3$\\
17&$-3w + 32$&$-5e+3$&&17&$-17152w - 169879$&$2e - 3$&&23&$-2w - 21$&$e + 2$\\
17&$-3w - 29$&$-5e+3$&&25&$5$&$0$&&41&$w - 13$&$-2e + 2$\\
31&$9w - 97$&$2e+4$&&37&$-12w - 119$&$-3$&&41&$-w - 12$&$2e$\\\bottomrule
\multicolumn{11}{c}{}\\
\multicolumn{11}{c}{}\\
\toprule
\multicolumn{3}{c}{$D=613,\,\,D'=21$}&\phantom{abc}&\multicolumn{3}{c}{$D=677,\,\,D'=13$}&\phantom{abc}&\multicolumn{3}{c}{$D=677,\,\,D'=29$}\\\cmidrule{1-3}\cmidrule{5-7}\cmidrule{9-11}
$\mathbf{N}\mathfrak{p}$&$\mathfrak{p}$&$a_{\mathfrak{p}}(f)$&&$\mathbf{N}\mathfrak{p}$&$\mathfrak{p}$&$a_{\mathfrak{p}}(f)$&&$\mathbf{N}\mathfrak{p}$&$\mathfrak{p}$&$a_{\mathfrak{p}}(f)$\\\midrule
3&$w - 13$&$e$&&4&$2$&$0$&&4&$2$&$-1$\\
3&$-w - 12$&$-e + 1$&&9&$3$&$-4$&&9&$3$&$-3$\\
4&$2$&$0$&&13&$-w + 13$&$-e + 1$&&13&$-w + 13$&$e + 2$\\
7&$8w + 95$&$2$&&13&$-w - 12$&$e$&&13&$-w - 12$&$-e + 3$\\
7&$8w - 103$&$2$&&25&$5$&$-7$&&25&$5$&$-3$\\
17&$33w + 392$&$-e + 5$&&37&$-w - 11$&$-4e - 1$&&37&$-w - 11$&$e - 3$\\
17&$33w - 425$&$e + 4$&&37&$-w + 12$&$4e - 5$&&37&$-w + 12$&$-e - 2$\\
19&$-9w - 107$&$3$&&41&$w - 15$&$-e + 9$&&41&$w - 15$&$3e$\\
19&$9w - 116$&$3$&&41&$w + 14$&$e + 8$&&41&$w + 14$&$-3e + 3$\\
25&$5$&$-6$&&49&$7$&$-3$&&49&$7$&$-10$\\
29&$-w + 14$&$-2e + 7$&&59&$w - 11$&$-2e + 5$&&59&$w - 11$&$-3e + 6$\\
29&$w + 13$&$2e + 5$&&59&$-w - 10$&$2e + 3$&&59&$-w - 10$&$3e + 3$\\\bottomrule
\end{tabular}
\end{center}
\end{table*}

\addtocounter{table}{-1}

\begin{table*}[t!]
\caption{continued}
\begin{center}\scriptsize
\begin{tabular}{@{}crrccrrccrr@{}} 
\toprule
\multicolumn{3}{c}{$D=677,\,\,D'=85$}&\phantom{abc}&\multicolumn{3}{c}{$D=709,\,\,D'=5$}&\phantom{abc}&\multicolumn{3}{c}{$D=797,\,\,D'=8$}\\\cmidrule{1-3}\cmidrule{5-7}\cmidrule{9-11}
$\mathbf{N}\mathfrak{p}$&$\mathfrak{p}$&$a_{\mathfrak{p}}(f)$&&$\mathbf{N}\mathfrak{p}$&$\mathfrak{p}$&$a_{\mathfrak{p}}(f)$&&$\mathbf{N}\mathfrak{p}$&$\mathfrak{p}$&$a_{\mathfrak{p}}(f)$\\\midrule
4&$2$&$-3$&&3&$-59w - 756$&$2e - 1$&&4&$2$&$-3$\\
9&$3$&$-1$&&3&$59w - 815$&$-2e + 1$&&9&$3$&$-3$\\
13&$-w + 13$&$e$&&4&$2$&$0$&&11&$-w + 15$&$3e$\\
13&$-w - 12$&$-e + 1$&&5&$w - 14$&$2e + 1$&&11&$w + 14$&$-3e$\\
25&$5$&$-7$&&5&$-w - 13$&$-2e + 3$&&13&$2w - 29$&$-2e - 1$\\
37&$-w - 11$&$e + 7$&&7&$-16w + 221$&$-2e$&&13&$2w + 27$&$2e - 1$\\
37&$-w + 12$&$-e + 8$&&7&$-16w - 205$&$2e - 2$&&17&$w + 13$&$-2e$\\
41&$w - 15$&$e + 2$&&11&$-547w - 7009$&$-4e + 1$&&17&$w - 14$&$2e$\\
41&$w + 14$&$-e + 3$&&11&$547w - 7556$&$4e - 3$&&25&$5$&$0$\\
49&$7$&$-6$&&19&$6w - 83$&$2e + 4$&&41&$-w - 15$&$2e - 5$\\
59&$w - 11$&$-e - 6$&&19&$6w + 77$&$-2e + 6$&&41&$-w + 16$&$-2e - 5$\\
59&$-w - 10$&$e - 7$&&29&$75w - 1036$&$2e - 3$&&43&$w - 13$&$-e - 4$\\\bottomrule
\multicolumn{11}{c}{}\\
\multicolumn{11}{c}{}\\
\toprule
\multicolumn{3}{c}{$D=797,\,\,D'=29$}&\phantom{abc}&\multicolumn{3}{c}{$D=809,\,\,D'=5$}&\phantom{abc}&\multicolumn{3}{c}{$D=821,\,\,D'=44$}\\\cmidrule{1-3}\cmidrule{5-7}\cmidrule{9-11}
$\mathbf{N}\mathfrak{p}$&$\mathfrak{p}$&$a_{\mathfrak{p}}(f)$&&$\mathbf{N}\mathfrak{p}$&$\mathfrak{p}$&$a_{\mathfrak{p}}(f)$&&$\mathbf{N}\mathfrak{p}$&$\mathfrak{p}$&$a_{\mathfrak{p}}(f)$\\\midrule
4&$2$&$0$&&2&$-219w + 3224$&$-e + 1$&&4&$2$&$-1$\\
9&$3$&$-3$&&2&$-219w - 3005$&$e$&&5&$w - 15$&$0$\\
11&$-w + 15$&$3$&&5&$21796w - 320869$&$e$&&5&$-w - 14$&$0$\\
11&$w + 14$&$3$&&5&$-21796w - 299073$&$-e + 1$&&7&$-6w - 83$&$e - 1$\\
13&$2w - 29$&$e + 3$&&7&$-18w - 247$&$2e - 2$&&7&$6w - 89$&$-e - 1$\\
13&$2w + 27$&$-e + 4$&&7&$18w - 265$&$-2e$&&9&$3$&$-3$\\
17&$w + 13$&$e + 1$&&9&$3$&$-4$&&19&$5w - 74$&$e - 5$\\
17&$w - 14$&$-e + 2$&&13&$-4w - 55$&$-3e + 2$&&19&$5w + 69$&$-e - 5$\\
25&$5$&$-6$&&13&$4w - 59$&$3e - 1$&&23&$-w - 13$&$-e - 3$\\
41&$-w - 15$&$-e + 6$&&19&$140w + 1921$&$e - 5$&&23&$-w + 14$&$e - 3$\\
41&$-w + 16$&$e + 5$&&19&$-140w + 2061$&$-e - 4$&&29&$11w - 163$&$-2e - 3$\\
43&$w - 13$&$2e - 5$&&23&$2926w - 43075$&$-3e + 6$&&29&$-11w - 152$&$2e - 3$\\\bottomrule
\multicolumn{11}{c}{}\\
\multicolumn{11}{c}{}\\
\toprule
\multicolumn{3}{c}{$D=853,\,\,D'=21$}&\phantom{abc}&\multicolumn{3}{c}{$D=929,\,\,D'=13$}&\phantom{abc}&\multicolumn{3}{c}{$D=997,\,\,D'=13$}\\\cmidrule{1-3}\cmidrule{5-7}\cmidrule{9-11}
$\mathbf{N}\mathfrak{p}$&$\mathfrak{p}$&$a_{\mathfrak{p}}(f)$&&$\mathbf{N}\mathfrak{p}$&$\mathfrak{p}$&$a_{\mathfrak{p}}(f)$&&$\mathbf{N}\mathfrak{p}$&$\mathfrak{p}$&$a_{\mathfrak{p}}(f)$\\\midrule
3&$-w + 15$&$-e + 1$&&2&$561w - 8830$&$-e + 1$&&3&$-7w - 107$&$e$\\
3&$-w - 14$&$e$&&2&$561w + 8269$&$e$&&3&$-7w + 114$&$-e + 1$\\
4&$2$&$0$&&5&$-4w - 59$&$-e + 1$&&4&$2$&$0$\\
19&$9w + 127$&$5$&&5&$4w - 63$&$e$&&13&$3w + 46$&$2e + 2$\\
19&$-9w + 136$&$5$&&9&$3$&$3$&&13&$-3w + 49$&$-2e + 4$\\
23&$19w - 287$&$e + 1$&&11&$-8342w + 131301$&$2e - 3$&&19&$4w - 65$&$e + 1$\\
23&$19w + 268$&$-e + 2$&&11&$8342w + 122959$&$-2e - 1$&&19&$4w + 61$&$-e + 2$\\
25&$5$&$3$&&19&$-50w - 737$&$e - 2$&&23&$-w - 16$&$6$\\
31&$w - 14$&$-3$&&19&$50w - 787$&$-e - 1$&&23&$-w + 17$&$6$\\
31&$-w - 13$&$-3$&&23&$-42832w + 674165$&$4e - 4$&&25&$5$&$-4$\\
41&$8w - 121$&$-3e + 3$&&23&$42832w + 631333$&$-4e$&&31&$80w + 1223$&$-2e + 1$\\
41&$8w + 113$&$3e$&&29&$2w + 29$&$-2e + 6$&&31&$80w - 1303$&$2e - 1$\\\bottomrule
\end{tabular}
\end{center}
\end{table*}

\clearpage

\begin{small}
\subsection*{Acknowledgements} 
We thank Fred Diamond and Haluk \c{S}eng\"un for several helpful email
exchanges and discussions, and Noam Elkies, Neil Dummigan, Matthias
Sch\"utt and the anonymous referee for useful comments on earlier
drafts of this paper. We are also thankful to Florian Bouyer and Marco
Streng for many helpful exchanges, and for kindly allowing us to use
their reduction package.  In the early stages of this project, the
first-named author spent some time at the Max-Planck Institute for
Mathematics in Bonn. He would like to express his gratitude for their
hospitality and support.
\end{small}

\end{document}